\documentclass[12pt, reqno]{amsart}
\UseRawInputEncoding
\usepackage{amsmath, amsthm, amscd, amsfonts, amssymb, graphicx, color}
\usepackage[bookmarksnumbered, colorlinks, plainpages]{hyperref}
\hypersetup{colorlinks=true,linkcolor=red, anchorcolor=green,
citecolor=cyan, urlcolor=red, filecolor=magenta, pdftoolbar=true}

\textheight 22.5truecm \textwidth 14.5truecm
\setlength{\oddsidemargin}{0.35in}\setlength{\evensidemargin}{0.35in}

\setlength{\topmargin}{-.5cm}

\newtheorem{theorem}{Theorem}[section]

\newtheorem{corollary}[theorem]{Corollary}
\theoremstyle{definition}
\newtheorem{definition}[theorem]{Definition}
\newtheorem{example}[theorem]{Example}

\theoremstyle{remark}
\newtheorem{remark}[theorem]{Remark}
\numberwithin{equation}{section}
\usepackage{color}

\begin{document}

\setcounter{page}{1}

\title[Inequalities for the Hilbert--Schmidt norm: A geometric approach]
{A geometric approach to inequalities for the Hilbert--Schmidt norm}

\author[A.~Zamani]
{Ali Zamani}

\address{School of Mathematics and Computer Sciences, Damghan University, P.O.BOX 36715-364, Damghan, Iran}
\email{zamani.ali85@yahoo.com}

\subjclass[2010]{47A63; 47A30; 47B10.}

\keywords{Hilbert--Schmidt norm; operator inequalities; angle.}
\begin{abstract}
We define angle $\Theta_{X,Y}$ between non-zero Hilbert--Schmidt operators $X$ and $Y$ by
$\cos\Theta_{_{X,Y}} = \frac{{\rm Re}{\rm Tr}(Y^*X)}{{\|X\|}_{_2}{\|Y\|}_{_2}}$,
and give some of its essentially properties.
It is shown, among other things, that
\begin{align*}
\big|\cos\Theta_{_{X,Y}}\big|\leq \min\left\{\sqrt{\cos\Theta_{_{|X^*|,|Y^*|}}}, \sqrt{\cos\Theta_{_{|X|,|Y|}}}\right\}.
\end{align*}
It enables us to provide alternative proof of some well-known inequalities for the Hilbert--Schmidt norm.
In particular, we apply this inequality to prove Lee's conjecture
[Linear Algebra Appl. 433 (2010), no.~3, 580--584]
as follows
\begin{align*}
{\big\|X + Y\big\|}_{_2} \leq \sqrt{\frac{\sqrt{2} + 1}{2}}\,{\big\|\,|X| + |Y|\,\big\|}_{_2}.
\end{align*}
A numerical example is presented to show the constant $\sqrt{\frac{\sqrt{2} + 1}{2}}$ is smallest possible.
Other related inequalities for the Hilbert--Schmidt norm are also considered.
\end{abstract}
\maketitle
\section{Introduction and preliminaries}
Let $\mathcal{B}(\mathcal{H})$ denote the $C^*$-algebra of all bounded linear operators acting on
a Hilbert space $\big(\mathcal{H}, [\cdot, \cdot]\big)$.
For every $X\in\mathcal{B}(\mathcal{H})$, let $|X|$ denote the square root of $X^*X$, that is, $|X| = (X^*X)^{1/2}$.
Let $X = U|X|$ be the polar decomposition of $X$, where $U$ is some partial isometry.
The polar decomposition satisfies
\begin{align}\label{I.1.1}
U^*X = |X|, \, U^*U|X| = |X|, \, U^*UX = X, \, X^*=|X|U^*, \, |X^*| = U|X|U^*.
\end{align}
Let $\mathcal{C}_1(\mathcal{H})$ and $\mathcal{C}_2(\mathcal{H})$
denote the trace class and the Hilbert--Schmidt class in $\mathcal{B}(\mathcal{H})$ respectively.
It is well known that $\mathcal{C}_1(\mathcal{H})$ and $\mathcal{C}_2(\mathcal{H})$ each form a
two-sided $*$-ideal in $\mathcal{B}(\mathcal{H})$ and $\mathcal{C}_2(\mathcal{H})$ is itself a Hilbert space with the inner product
\begin{align}\label{I.1.5.1}
\langle X, Y\rangle = \displaystyle{\sum_{i=1}^{\infty} [Xe_i, Ye_i] = {\rm Tr}(Y^*X)}
\end{align}
where $\{e_i\}^{\infty}_{i=1}$ is any orthonormal basis of $\mathcal{H}$ and ${\rm Tr}(\cdot)$ is the natural trace on $\mathcal{C}_1(\mathcal{H})$.
Three principal properties of the trace are that it is a linear functional and, for every $X$ and $Y$, we have
${\rm Tr}(X^*) = \overline{{\rm Tr}(X)}$ and ${\rm Tr}(XY) = {\rm Tr}(YX)$.
The Hilbert--Schmidt norm of $X\in\mathcal{C}_2(\mathcal{H})$ is given by ${\|X\|}_{_2} = \sqrt{\langle X, X\rangle}$.
One more fact that will be needed the sequel is that if $X\in\mathcal{C}_2(\mathcal{H})$, then
\begin{align}\label{I.2.1}
{\|X\|}_{_2} = {\|X^*\|}_{_2} = {\big\|\,|X|\,\big\|}_{_2} = {\big\|\,|X^*|\,\big\|}_{_2}.
\end{align}
The reader is referred to \cite{Mu, Simon} for further properties of the Hilbert--Schmidt class.

In Section 2, we define the angle $\Theta_{X,Y}$ between non-zero Hilbert--Schmidt operators
$X, Y$ and present some of its essentially properties.
We obtain the cosine theorem for Hilbert--Schmidt operators and propose
some geometric results. In particular, we state an extension of the Fuglede--Putnam theorem
modulo the Hilbert--Schmidt class.
We also present a triangle inequality for angles in the Hilbert--Schmidt class.
Moreover, we prove that
$|\langle X, Y\rangle|^2 \leq \langle |X^*|, |Y^*|\rangle\langle |X|, |Y|\rangle$
and then we apply it to obtain
$\cos^2\Theta_{_{X,Y}} \leq \cos\Theta_{_{|X^*|,|Y^*|}}\cos\Theta_{_{|X|,|Y|}}$.

In Section 3, by using the results in Section 2,
we provide alternative proof of some well-known inequalities for the Hilbert--Schmidt norm.
For example we present a considerably briefer proof of an extension of the Araki--Yamagami inequality \cite{Araki.Yamagami}
obtained by F.~Kittaneh \cite{Kit.3}.
In particular, we prove Lee's conjecture \cite[p.~584]{Lee} on the sum of the square roots of operators.
Some related inequalities and numerical examples are also presented.
\section{Angle between two Hilbert--Schmidt operators}
Let $X, Y\in \mathcal{C}_2(\mathcal{H})$.
Since $\mathcal{C}_2(\mathcal{H})$ is a Hilbert space with the inner product \eqref{I.1.5.1},
by the Cauchy--Schwarz inequality, we have
\begin{align}\label{T.I.2.11}
-{\|X\|}_{_2}{\|Y\|}_{_2}\leq -|\langle X, Y\rangle| \leq {\rm Re}\langle X, Y\rangle\leq |\langle X, Y\rangle| \leq {\|X\|}_{_2}{\|Y\|}_{_2}.
\end{align}
Therefore, when $X$ and $Y$ are non-zero operators, the inequality \eqref{T.I.2.11} implies
\begin{align*}
-1\leq \frac{{\rm Re}\langle X, Y\rangle}{{\|X\|}_{_2}{\|Y\|}_{_2}} \leq 1.
\end{align*}
This motivates (see also \cite{B.C.M.W.Z, Z.M.F}) defining the angle between $X$ and $Y$ as follows.
\begin{definition}
For non-zero operators $X, Y \in\mathcal{C}_2(\mathcal{H})$, the angle $\Theta_{X,Y}$ between $X$ and $Y$ is defined by
\begin{align*}
\cos\Theta_{_{X,Y}} = \frac{{\rm Re}\langle X, Y\rangle}{{\|X\|}_{_2}{\|Y\|}_{_2}}; \quad 0\leq \Theta_{X,Y} \leq \pi.
\end{align*}
\end{definition}
\begin{remark}\label{R.12}
Let $X, Y\in \mathcal{C}_2(\mathcal{H})\setminus\{0\}$.
\begin{itemize}
\item[(i)] Since $0\leq \Theta_{_{X,Y}} \leq \pi$, we have $\sin\Theta_{_{X,Y}} = \sqrt{1-\cos^2\Theta_{_{X,Y}}}$.
\item[(ii)] If $X$ and $Y$ are positive, then ${\rm Re}\langle X, Y\rangle\geq 0$ and hence $0\leq \Theta_{_{X,Y}}\leq \frac{\pi}{2}$.
\item[(iii)] One can see that $\cos\Theta_{_{\gamma X,\gamma Y}} = \cos\Theta_{_{X,Y}}$ for all $\gamma \in\mathbb{C}\setminus\{0\}$.
\item[(iv)] For every $\alpha, \beta\in\mathbb{R}\setminus\{0\}$, it is easy to check that
\begin{align*}
\cos\Theta_{_{\alpha X, \beta Y}}=\left\lbrace \begin{array}{ll}
\cos\Theta_{_{X,Y}}  & \alpha\beta>0 \\
-\cos\Theta_{_{X,Y}} & \alpha\beta<0. \end{array}\right.
\end{align*}
\end{itemize}
\end{remark}
\begin{definition}\label{de.31}
Let $X$ and $Y$ be non-zero Hilbert--Schmidt operators on $\mathcal{H}$.
\begin{itemize}
\item[(i)] $X$ is called weak orthogonal to $Y$, denoted by $X\perp_{_{w}}Y$, if $\cos \Theta_{_{X,Y}} = 0$.
\item[(ii)] $X$ is called weak parallel to $Y$, denoted by $X\parallel_{_{w}}Y$, if $\sin \Theta_{_{X,Y}} = 0$.
\end{itemize}
\end{definition}
\begin{example}
Let us recall that by \cite[p.~66]{Mu} we have
\begin{align*}
{\rm Tr}(a \otimes b) = [a, b] \quad \mbox{and} \quad {\|a \otimes b\|}_{_2} = \|a\|\|b\|,
\end{align*}
for all $a, b \in\mathcal{H}$.
Here, $a \otimes b$ denotes the rank one operator in $\mathcal{B}(\mathcal{H})$ defined by
$(a \otimes b)c := [c, b]a$ for all $c \in\mathcal{H}$.
Now, let $x, y, z \in\mathcal{H}\setminus\{0\}$. Put $X=x\otimes z$ and $Y=y\otimes z$.
A simple calculation shows that $\langle X, Y\rangle = \|z\|^2[x, y]$.
Thus,
\begin{align*}
\cos \Theta_{_{X,Y}} = \frac{{\rm Re}[x, y]}{\|x\|\|y\|}.
\end{align*}
In particular, $x\otimes z \perp_{_{w}}y\otimes z$ if and only if $x\perp_{_{w}}y$ (i.e., satisfies ${\rm Re}[x, y] =0$).
\end{example}
First we obtain the cosine theorem for Hilbert--Schmidt operators.
\begin{theorem}\label{T.14}
If $X$ and $Y$ are non-zero Hilbert--Schmidt operators on $\mathcal{H}$, then
\begin{align*}
{\|X \pm Y\|}^2_{_2} = {\|X\|}^2_{_2} + {\|Y\|}^2_{_2} \pm 2{\|X\|}_{_2}{\|Y\|}_{_2}\cos\Theta_{_{X,Y}}.
\end{align*}
\end{theorem}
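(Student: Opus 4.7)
The statement is the familiar law of cosines transplanted to the Hilbert space $\mathcal{C}_2(\mathcal{H})$ equipped with the inner product \eqref{I.1.5.1}, so the proof is essentially a one-line unfolding of definitions; I will present it as a structured plan nonetheless.

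The plan is to expand ${\|X \pm Y\|}_{_2}^2$ via the inner product on $\mathcal{C}_2(\mathcal{H})$. First I would write
\begin{align*}
{\|X \pm Y\|}_{_2}^2 = \langle X \pm Y,\, X \pm Y\rangle = \langle X, X\rangle + \langle Y, Y\rangle \pm \langle X, Y\rangle \pm \langle Y, X\rangle,
\end{align*}
using sesquilinearity of $\langle \cdot, \cdot\rangle$. The first two terms are just ${\|X\|}_{_2}^2$ and ${\|Y\|}_{_2}^2$ by the definition of the Hilbert--Schmidt norm.

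Next I would combine the cross terms. From the properties of the trace recalled in the introduction, $\langle Y, X\rangle = {\rm Tr}(X^*Y) = \overline{{\rm Tr}(Y^*X)} = \overline{\langle X, Y\rangle}$, so
\begin{align*}
\langle X, Y\rangle + \langle Y, X\rangle = 2\,{\rm Re}\langle X, Y\rangle.
\end{align*}
Finally, I would substitute the definition of the angle, namely ${\rm Re}\langle X, Y\rangle = {\|X\|}_{_2}{\|Y\|}_{_2}\cos\Theta_{_{X,Y}}$, which is legitimate because $X$ and $Y$ are assumed to be non-zero.

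There is no genuine obstacle here: once one recognizes that the whole computation takes place in the Hilbert space $\bigl(\mathcal{C}_2(\mathcal{H}), \langle \cdot, \cdot\rangle\bigr)$, the cosine theorem is a direct consequence of the polarization of $\|{\cdot}\|_{_2}^2$, exactly as in a Euclidean inner product space. The only thing worth flagging is the role of ${\rm Re}$: since $\mathcal{C}_2(\mathcal{H})$ is a complex Hilbert space, the cross terms a priori contribute $\langle X, Y\rangle + \overline{\langle X, Y\rangle}$ rather than $2\langle X,Y\rangle$, and it is precisely this real-part combination that matches the definition of $\cos\Theta_{_{X,Y}}$ adopted in the previous section, so the two $\pm$ signs on the two sides of the identity are consistent.
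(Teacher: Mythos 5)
Your proposal is correct and follows exactly the paper's own argument: expand $\langle X\pm Y, X\pm Y\rangle$ by sesquilinearity, combine the cross terms into $\pm 2\,{\rm Re}\langle X,Y\rangle$, and substitute the definition of $\cos\Theta_{_{X,Y}}$. Your added remark about why the real part appears (the complex conjugate cross term) is a harmless elaboration of the same computation.
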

\begin{proof}
By \eqref{I.1.5.1} and Definition \ref{de.31}, we have
\begin{align*}
{\|X \pm Y\|}^2_{_2} & = \langle X \pm Y, X \pm Y\rangle
\\& = \langle X, X\rangle + \langle Y, Y\rangle \pm \langle X, Y\rangle \pm \langle Y, X\rangle
\\& = {\|X\|}^2_{_2} + {\|Y\|}^2_{_2} \pm 2{\rm Re}\langle X, Y\rangle
= {\|X\|}^2_{_2} + {\|Y\|}^2_{_2} \pm 2{\|X\|}_{_2}{\|Y\|}_{_2}\cos\Theta_{_{X,Y}}.
\end{align*}
\end{proof}
As an immediate consequence of Theorem \ref{T.14}, we get the Pythagorean relation for Hilbert-Schmidt operators.
\begin{corollary}\label{T.15}
If $X$ and $Y$ are non-zero Hilbert--Schmidt operators on $\mathcal{H}$,
then $X\perp_{_{w}}Y$ if and only if ${\|X + Y\|}^2_{_2} = {\|X\|}^2_{_2} + {\|Y\|}^2_{_2}$.
\end{corollary}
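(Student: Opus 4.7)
The plan is to read this directly off the cosine theorem (Theorem~\ref{T.14}) applied with the plus sign. Specifically, I would invoke the identity
\begin{align*}
\|X+Y\|_2^2 = \|X\|_2^2 + \|Y\|_2^2 + 2\|X\|_2\|Y\|_2\cos\Theta_{X,Y}
\end{align*}
and observe that the Pythagorean-type identity $\|X+Y\|_2^2 = \|X\|_2^2 + \|Y\|_2^2$ holds if and only if the cross term $2\|X\|_2\|Y\|_2\cos\Theta_{X,Y}$ vanishes.

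Since $X$ and $Y$ are assumed to be non-zero Hilbert--Schmidt operators, the factors $\|X\|_2$ and $\|Y\|_2$ are strictly positive, so the cross term vanishes precisely when $\cos\Theta_{X,Y}=0$. By Definition~\ref{de.31}(i), this is exactly the condition $X\perp_w Y$, giving the desired equivalence in both directions simultaneously.

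There is no real obstacle here: the corollary is a pure bookkeeping consequence of Theorem~\ref{T.14} together with the definition of weak orthogonality, and the only subtlety worth mentioning is the nonvanishing of $\|X\|_2\|Y\|_2$, which is guaranteed by the hypothesis that $X$ and $Y$ are non-zero.
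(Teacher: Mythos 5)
Your proof is correct and is exactly the argument the paper intends: the corollary is stated as an immediate consequence of Theorem~\ref{T.14}, and your reading-off of the vanishing cross term $2\|X\|_2\|Y\|_2\cos\Theta_{X,Y}$, using $\|X\|_2\|Y\|_2>0$, is the right (and only) point to check.
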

The following result is another immediate consequence of Theorem \ref{T.14}.
\begin{corollary}\label{T.151}
If $X$ and $Y$ are non-zero Hilbert--Schmidt operators on $\mathcal{H}$,
then $X\parallel_{_{w}}Y$ if and only if ${\|X + Y\|}_{_2} = \Big|{\|X\|}_{_2} \pm {\|Y\|}_{_2}\Big|$.
\end{corollary}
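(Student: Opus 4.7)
The plan is to derive the equivalence directly from Theorem \ref{T.14} by identifying the condition $\sin\Theta_{X,Y}=0$ with $\cos\Theta_{X,Y}\in\{-1,1\}$, which is immediate from Remark \ref{R.12}(i). In one direction, if $X\parallel_{w}Y$, then $\cos\Theta_{X,Y}=\pm 1$, and substituting each value into the identity
\[
\|X+Y\|_{2}^{2} = \|X\|_{2}^{2} + \|Y\|_{2}^{2} + 2\|X\|_{2}\|Y\|_{2}\cos\Theta_{X,Y}
\]
from Theorem \ref{T.14} (with the plus sign) collapses the right-hand side to either $(\|X\|_{2}+\|Y\|_{2})^{2}$ or $(\|X\|_{2}-\|Y\|_{2})^{2}$. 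Taking square roots yields $\|X+Y\|_{2}=\big|\|X\|_{2}\pm\|Y\|_{2}\big|$ in the two cases.

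For the converse, I would start from the assumed equality $\|X+Y\|_{2}=\big|\|X\|_{2}\pm\|Y\|_{2}\big|$, square both sides, and compare with the cosine identity of Theorem \ref{T.14}. Since $X$ and $Y$ are non-zero, the positive factor $2\|X\|_{2}\|Y\|_{2}$ can be cancelled, forcing $\cos\Theta_{X,Y}=1$ in the first case and $\cos\Theta_{X,Y}=-1$ in the second. In either case, Remark \ref{R.12}(i) gives $\sin\Theta_{X,Y}=0$, that is, $X\parallel_{w}Y$.

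No serious obstacle is expected: the content of the corollary is essentially a translation of Theorem \ref{T.14} under the boundary condition $\cos\Theta_{X,Y}=\pm 1$, mirroring the Euclidean fact that $\|u+v\|=\big|\|u\|\pm\|v\|\big|$ characterizes collinearity. The only small point to keep explicit is the use of $\|X\|_{2}>0$ and $\|Y\|_{2}>0$, supplied by the hypothesis that $X$ and $Y$ are non-zero, so that the cancellation in the converse direction is legitimate.
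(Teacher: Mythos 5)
Your proof is correct and is exactly the argument the paper intends: the corollary is stated there without proof as an ``immediate consequence'' of Theorem \ref{T.14}, and your two-directional substitution of $\cos\Theta_{X,Y}=\pm 1$ into the cosine identity, together with the cancellation of $2\|X\|_2\|Y\|_2>0$ in the converse, is precisely that immediate derivation.
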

Let us recall the Fuglede--Putnam theorem modulo the Hilbert--Schmidt class due to G.~Weiss \cite[Theorem~1]{Weiss}:
if $X$ and $Y$ are normal operators and $Z$ is a operator on $\mathcal{H}$, then
${\|XZ - ZY\|}_{_2} = {\|X^*Z - ZY^*\|}_{_2}$.
In the following theorem, we present an extension of the Fuglede--Putnam theorem
modulo the Hilbert--Schmidt class which provided by Kittaneh in \cite{Kit.4}.
Our proof is very different from that in \cite[Theorem~5]{Kit.4}.
\begin{theorem}\label{T.1409}
If $X, Y$ and $Z$ are operators on $\mathcal{H}$, then
\begin{align*}
{\|XZ - ZY\|}^2_{_2} + {\|X^*Z\|}^2_{_2} + {\|ZY^*\|}^2_{_2} = {\|XZ\|}^2_{_2} + {\|ZY\|}^2_{_2} + {\|X^*Z - ZY^*\|}^2_{_2}.
\end{align*}
\end{theorem}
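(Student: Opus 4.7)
My plan is to reduce the identity to an equality of real parts of two trace expressions, then dispatch that by cyclicity of the trace together with the conjugation property ${\rm Tr}(A^*) = \overline{{\rm Tr}(A)}$. Throughout I will implicitly assume the operators involved make all six Hilbert--Schmidt norms finite (e.g.\ $Z \in \mathcal{C}_2(\mathcal{H})$, which is the natural setting since $\mathcal{C}_2(\mathcal{H})$ is a two-sided ideal).

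First I would apply the cosine theorem (Theorem \ref{T.14}) with the minus sign to each of the two difference norms: this gives
\begin{align*}
{\|XZ - ZY\|}^2_{_2} &= {\|XZ\|}^2_{_2} + {\|ZY\|}^2_{_2} - 2{\rm Re}\langle XZ, ZY\rangle, \\
{\|X^*Z - ZY^*\|}^2_{_2} &= {\|X^*Z\|}^2_{_2} + {\|ZY^*\|}^2_{_2} - 2{\rm Re}\langle X^*Z, ZY^*\rangle.
\end{align*}
Substituting both expansions into the two sides of the claimed identity, the four square-norm terms ${\|XZ\|}^2_{_2}, {\|ZY\|}^2_{_2}, {\|X^*Z\|}^2_{_2}, {\|ZY^*\|}^2_{_2}$ cancel. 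What remains to prove is simply
\begin{align*}
{\rm Re}\langle XZ, ZY\rangle = {\rm Re}\langle X^*Z, ZY^*\rangle.
\end{align*}

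For this last step, I would rewrite both inner products as traces via \eqref{I.1.5.1}: namely $\langle XZ, ZY\rangle = {\rm Tr}(Y^*Z^*XZ)$ and $\langle X^*Z, ZY^*\rangle = {\rm Tr}(YZ^*X^*Z)$. Using cyclicity, ${\rm Tr}(YZ^*X^*Z) = {\rm Tr}(Z^*X^*ZY)$. But $Z^*X^*ZY$ is precisely the adjoint of $Y^*Z^*XZ$, so by the property ${\rm Tr}(A^*) = \overline{{\rm Tr}(A)}$ these two traces are complex conjugates, which forces their real parts to agree.

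The computation is essentially mechanical once one sees the right move; the only mild obstacle is recognizing that the identity is designed so that the interfering norms cancel, leaving a single scalar comparison that is resolved by the fact that swapping $X \leftrightarrow X^*$ and $Y \leftrightarrow Y^*$ in the cyclic trace produces the conjugate. No deeper structural tool (polar decomposition, spectral theorem, or Fuglede--Putnam itself) is needed, which is precisely what makes this proof shorter than the original one in \cite{Kit.4}.
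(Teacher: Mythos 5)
Your proposal is correct and follows essentially the same route as the paper: expand the two difference norms, cancel the four square-norm terms, and reduce to ${\rm Re}\langle XZ, ZY\rangle = {\rm Re}\langle X^*Z, ZY^*\rangle$, which both you and the author establish by trace cyclicity together with ${\rm Tr}(A^*)=\overline{{\rm Tr}(A)}$. The only (cosmetic) difference is that you work directly with real parts of inner products, which lets you skip the paper's case distinction needed to write things in terms of $\cos\Theta$.
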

\begin{proof}
We may assume that $XZ, ZY, X^*Z, ZY^*\neq0$ otherwise the desired equality trivially holds.
By \eqref{I.1.5.1} and Definition \ref{de.31} we have
\begin{align*}
{\|XZ\|}_{_2}{\|ZY\|}_{_2}\cos\Theta_{_{XZ,ZY}} & = {\rm Re}\langle XZ, ZY\rangle
= {\rm Re}{\rm Tr}\Big(Y^*Z^*XZ\Big)
\\&= {\rm Re}\overline{{\rm Tr}\Big(Z^*X^*ZY\Big)}
= {\rm Re}{\rm Tr}\Big(Z^*X^*ZY\Big)
\\&= {\rm Re}{\rm Tr}\Big(YZ^*X^*Z\Big)
= {\rm Re}\langle X^*Z, ZY^*\rangle
\\&= {\|X^*Z\|}_{_2}{\|ZY^*\|}_{_2}\cos\Theta_{_{X^*Z,ZY^*}},
\end{align*}
and hence
\begin{align}\label{T.I.1.1489}
{\|XZ\|}_{_2}{\|ZY\|}_{_2}\cos\Theta_{_{XZ,ZY}} = {\|X^*Z\|}_{_2}{\|ZY^*\|}_{_2}\cos\Theta_{_{X^*Z,ZY^*}}.
\end{align}
So, by \eqref{T.I.1.1489} and Theorem \ref{T.14}, we have
\begingroup\makeatletter\def\f@size{11}\check@mathfonts
\begin{align*}
{\|XZ - ZY\|}^2_{_2} &+ {\|X^*Z\|}^2_{_2} + {\|ZY^*\|}^2_{_2}
\\&= {\|XZ\|}^2_{_2} + {\|ZY\|}^2_{_2} - 2{\|XZ\|}_{_2}{\|ZY\|}_{_2}\cos\Theta_{_{XZ,ZY}}
+ {\|X^*Z\|}^2_{_2} + {\|ZY^*\|}^2_{_2}
\\&= {\|XZ\|}^2_{_2} + {\|ZY\|}^2_{_2} - 2{\|X^*Z\|}_{_2}{\|ZY^*\|}_{_2}\cos\Theta_{_{X^*Z,ZY^*}}
+ {\|X^*Z\|}^2_{_2} + {\|ZY^*\|}^2_{_2}
\\& = {\|XZ\|}^2_{_2} + {\|ZY\|}^2_{_2} + {\|X^*Z - ZY^*\|}^2_{_2}.
\end{align*}
\endgroup
\end{proof}
\begin{remark}\label{R.128}
Let $X$ and $Y$ be normal operators and let $Z$ be a operator on $\mathcal{H}$.
By \eqref{I.1.5.1} we have
\begin{align*}
{\|X^*Z\|}^2_{_2} + {\|ZY^*\|}^2_{_2} & = \langle X^*Z, X^*Z\rangle + \langle ZY^*, ZY^*\rangle
\\&= {\rm Tr}\Big(Z^*XX^*Z\Big) + {\rm Tr}\Big(YZ^*ZY^*\Big)
\\& = {\rm Tr}\Big(Z^*XX^*Z\Big) + {\rm Tr}\Big(Z^*ZY^*Y\Big)
\\& = {\rm Tr}\Big(Z^*X^*XZ\Big) + {\rm Tr}\Big(Z^*ZYY^*\Big)
\\& = {\rm Tr}\Big(Z^*X^*XZ\Big) + {\rm Tr}\Big(Y^*Z^*ZY\Big)
\\& = \langle XZ, XZ\rangle + \langle ZY, ZY\rangle = {\|XZ\|}^2_{_2} + {\|ZY\|}^2_{_2}.
\end{align*}
Therefore if in Theorem \ref{T.1409}, $X$ and $Y$ are assumed to be normal operators,
then we retain the Fuglede-Putnam theorem modulo the Hilbert-Schmidt class.
\end{remark}
In the following result we present a triangle inequality for angles in the Hilbert--Schmidt class.
\begin{theorem}\label{T.189}
If $X, Y$ and $Z$ are non-zero Hilbert--Schmidt operators on $\mathcal{H}$, then
\begin{align*}
\sin\Theta_{_{X,Y}} \leq \sin\Theta_{_{X,Z}} + \sin\Theta_{_{Z,Y}}.
\end{align*}
\end{theorem}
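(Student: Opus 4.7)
The plan is to view $\mathcal{C}_2(\mathcal{H})$ as a real Hilbert space under ${\rm Re}\langle\cdot,\cdot\rangle$ (whose induced norm coincides with ${\|\cdot\|}_{_2}$), so that $\Theta_{_{A,B}}$ becomes the ordinary angle between $A$ and $B$ in this real Hilbert space; the statement then becomes a spherical-trigonometric fact about three unit vectors.

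First I would normalize by setting $\widehat{X}=X/{\|X\|}_{_2}$, $\widehat{Y}=Y/{\|Y\|}_{_2}$, $\widehat{Z}=Z/{\|Z\|}_{_2}$ and carrying out the real-orthogonal decomposition of $\widehat{X}$ and $\widehat{Y}$ along and perpendicular to $\widehat{Z}$:
\begin{align*}
\widehat{X}=(\cos\Theta_{_{X,Z}})\widehat{Z}+X',\qquad \widehat{Y}=(\cos\Theta_{_{Z,Y}})\widehat{Z}+Y',
\end{align*}
where ${\rm Re}\langle X',\widehat{Z}\rangle={\rm Re}\langle Y',\widehat{Z}\rangle=0$ and, by direct computation (essentially Theorem \ref{T.14} applied to these differences), ${\|X'\|}_{_2}=\sin\Theta_{_{X,Z}}$ and ${\|Y'\|}_{_2}=\sin\Theta_{_{Z,Y}}$. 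Taking ${\rm Re}\langle\widehat{X},\widehat{Y}\rangle$, the cross terms with $\widehat{Z}$ drop out and the Cauchy--Schwarz inequality bounds the remaining ${\rm Re}\langle X',Y'\rangle$, yielding a spherical cosine law
\begin{align*}
\cos\Theta_{_{X,Y}}=\cos\Theta_{_{X,Z}}\cos\Theta_{_{Z,Y}}+t\sin\Theta_{_{X,Z}}\sin\Theta_{_{Z,Y}}
\end{align*}
for some $t\in[-1,1]$.

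Both sides of the target inequality are non-negative (the angles lie in $[0,\pi]$), so it suffices to verify $(\sin\Theta_{_{X,Z}}+\sin\Theta_{_{Z,Y}})^2\geq\sin^2\Theta_{_{X,Y}}$. Writing $a=\cos\Theta_{_{X,Z}}$, $b=\cos\Theta_{_{Z,Y}}$, $p=\sin\Theta_{_{X,Z}}$, $q=\sin\Theta_{_{Z,Y}}$, and exploiting the Pythagorean identity $(1-a^2)(1-b^2)=p^2q^2$, the difference should rearrange to
\begin{align*}
(p+q)^2-\sin^2\Theta_{_{X,Y}}=p^2q^2(1+t^2)+2pq(1+abt),
\end{align*}
which is manifestly non-negative because $p,q\geq 0$, $1+t^2\geq 0$, and $|abt|\leq 1$ forces $1+abt\geq 0$. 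Taking square roots then delivers the inequality.

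The main obstacle I foresee is spotting this precise factorization: without the grouping into these two non-negative summands, the sign of $(p+q)^2-\sin^2\Theta_{_{X,Y}}$ is not evident. The only genuinely degenerate situation is when $\sin\Theta_{_{X,Z}}$ or $\sin\Theta_{_{Z,Y}}$ vanishes; in that case $\widehat{X}$ or $\widehat{Y}$ is a real scalar multiple of $\widehat{Z}$ and the desired inequality is immediate from Remark \ref{R.12}(iv). Beyond that, no tool heavier than the Cauchy--Schwarz inequality in the real Hilbert space $\bigl(\mathcal{C}_2(\mathcal{H}),\,{\rm Re}\langle\cdot,\cdot\rangle\bigr)$ is required.
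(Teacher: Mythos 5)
Your proof is correct, but it follows a genuinely different route from the paper. The paper's argument is a two-step reduction: it first invokes Rao's triangle inequality for angles in a Hilbert space, $\Theta_{_{X,Y}} \leq \Theta_{_{X,Z}} + \Theta_{_{Z,Y}}$, as an external black box, then reduces to the case $\Theta_{_{X,Z}}, \Theta_{_{Z,Y}} \in [0,\tfrac{\pi}{2}]$ by replacing $X$ by $-X$ and/or $Y$ by $-Y$, and finally splits into two cases according to whether $\Theta_{_{X,Z}} + \Theta_{_{Z,Y}}$ exceeds $\tfrac{\pi}{2}$, using the angle-addition formula in one case and the crude bound $\sin\Theta_{_{X,Y}} \leq 1 \leq \cos\Theta_{_{Z,Y}} + \sin\Theta_{_{Z,Y}}$ in the other. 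You instead work entirely inside the real Hilbert space $\bigl(\mathcal{C}_2(\mathcal{H}), {\rm Re}\langle\cdot,\cdot\rangle\bigr)$, decompose the normalized $\widehat{X}$ and $\widehat{Y}$ orthogonally against $\widehat{Z}$ to obtain the spherical law $\cos\Theta_{_{X,Y}} = ab + tpq$ with $t\in[-1,1]$, and then verify the identity $(p+q)^2 - \sin^2\Theta_{_{X,Y}} = p^2q^2(1+t^2) + 2pq(1+abt) \geq 0$; I checked this algebra and it is right, including the degenerate case $pq=0$. What your approach buys is self-containedness: it needs nothing beyond Cauchy--Schwarz, it dispenses with the citation of Rao (in effect you bypass, and could easily reprove, that lemma), and it avoids both the sign-flipping reduction and the case split. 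What the paper's approach buys is brevity modulo the reference, at the cost of importing an external result and a slightly fiddly case analysis. One small presentational point: you should state explicitly that when $pq>0$ the parameter $t$ is defined by $t = {\rm Re}\langle X',Y'\rangle / (pq)$ and lies in $[-1,1]$ by Cauchy--Schwarz, while for $pq=0$ the term ${\rm Re}\langle X',Y'\rangle$ vanishes outright so any $t$ (say $t=0$) makes the cosine formula valid; you gesture at this but it deserves one sentence rather than being deferred to a ``degenerate situation'' remark.
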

\begin{proof}
First note that, since $\mathcal{C}_2(\mathcal{H})$ is a Hilbert space,
by \cite{Rao} we have
\begin{align}\label{T.I.1.18}
\Theta_{_{X,Y}} \leq \Theta_{_{X,Z}} + \Theta_{_{Z,Y}}.
\end{align}
If ${\rm Re}\langle X, Z\rangle\leq 0$, we replace $X$ by $-X$; if
${\rm Re}\langle Z, Y\rangle\leq 0$, we replace $Y$ by $-Y$.
Therefore, we may assume that
$0\leq \Theta_{_{X,Z}}, \Theta_{_{Z,Y}} \leq \frac{\pi}{2}$ and $0\leq \Theta_{_{X,Y}} \leq \pi$.
Now, we consider two cases.\\

\textbf{Case 1.} $0\leq \Theta_{_{X,Z}} + \Theta_{_{Z,Y}} \leq \frac{\pi}{2}$.
Since $\sin \theta$ is a increasing function of $\theta$ in the interval $[0, \frac{\pi}{2}]$, by the inequality \eqref{T.I.1.18}, we have
\begin{align*}
\sin\Theta_{_{X,Y}} \leq \sin\big(\Theta_{_{X,Z}} + \Theta_{_{Z,Y}}\big)
= \sin\Theta_{_{X,Z}}\cos\Theta_{_{X,Z}} + \cos\Theta_{_{Z,Y}}\sin\Theta_{_{Z,Y}},
\end{align*}
and hence $\sin\Theta_{_{X,Y}} \leq \sin\Theta_{_{X,Z}} + \sin\Theta_{_{Z,Y}}$.

\textbf{Case 2.} $\frac{\pi}{2}< \Theta_{_{X,Z}} + \Theta_{_{Z,Y}} \leq \pi$.
Then $0 \leq \frac{\pi}{2} - \Theta_{_{Z,Y}} < \Theta_{_{X,Z}} \leq \frac{\pi}{2}$.
Hence,
\begin{align*}
\sin\Theta_{_{X,Y}} \leq 1 &\leq \sqrt{2}\sin\left(\frac{\pi}{4} + \Theta_{_{Z,Y}}\right)
\\& = \cos\Theta_{_{Z,Y}} + \sin\Theta_{_{Z,Y}}
\\& = \sin(\frac{\pi}{2} - \Theta_{_{Z,Y}}) + \sin\Theta_{_{Z,Y}} \leq
\sin\Theta_{_{X,Z}} + \sin\Theta_{_{Z,Y}},
\end{align*}
and the proof is completed.
\end{proof}
As consequences of Theorem \ref{T.189}, we have the following results.
\begin{corollary}\label{T.21}
Let $X, Y, Z\in \mathcal{C}_2(\mathcal{H})\setminus\{0\}$.
If $X\parallel_{_{w}}Z$ and $Z\parallel_{_{w}}Y$, then $X\parallel_{_{w}}Y$.
\end{corollary}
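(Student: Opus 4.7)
The plan is to derive this corollary as a direct consequence of the triangle inequality for angles established in Theorem \ref{T.189}. By Definition \ref{de.31}(ii), the hypotheses $X \parallel_{_{w}} Z$ and $Z \parallel_{_{w}} Y$ translate to $\sin\Theta_{_{X,Z}} = 0$ and $\sin\Theta_{_{Z,Y}} = 0$.

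The key step is then to apply Theorem \ref{T.189} with these three operators, which yields
\begin{align*}
0 \leq \sin\Theta_{_{X,Y}} \leq \sin\Theta_{_{X,Z}} + \sin\Theta_{_{Z,Y}} = 0 + 0 = 0,
\end{align*}
where the lower bound comes from the fact that $\Theta_{_{X,Y}} \in [0, \pi]$ forces $\sin\Theta_{_{X,Y}} \geq 0$ (see Remark \ref{R.12}(i)). Hence $\sin\Theta_{_{X,Y}} = 0$, which by Definition \ref{de.31}(ii) says exactly that $X \parallel_{_{w}} Y$.

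There is no real obstacle here: once the triangle inequality in Theorem \ref{T.189} is in hand, the corollary is a two-line deduction using the non-negativity of sine on $[0, \pi]$. The only minor point worth remarking on is to make clear that the weak parallelism relation is additive in the angle-theoretic sense (zero plus zero stays zero in the upper bound), which is what makes the transitivity work.
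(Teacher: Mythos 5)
Your proof is correct and follows exactly the paper's own argument: translate the hypotheses into $\sin\Theta_{_{X,Z}} = \sin\Theta_{_{Z,Y}} = 0$, apply the triangle inequality of Theorem \ref{T.189}, and use the non-negativity of $\sin\Theta_{_{X,Y}}$ on $[0,\pi]$ to conclude. Your explicit mention of the lower bound $\sin\Theta_{_{X,Y}}\geq 0$ is a small but welcome clarification that the paper leaves implicit.
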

\begin{proof}
Suppose that $X\parallel_{_{w}}Z$ and $Z\parallel_{_{w}}Y$. Then $\sin\Theta_{_{X,Z}} = \sin\Theta_{_{Z,Y}} = 0$.
Hence, by Theorem \ref{T.189}, we obtain $\sin\Theta_{_{X,Y}}=0$, or equivalently, $X\parallel_{_{w}}Y$.
\end{proof}
\begin{corollary}\label{T.28}
Let $X, Y, Z\in \mathcal{C}_2(\mathcal{H})\setminus\{0\}$.
If $X\perp_{_{w}}Y$ and $Z\parallel_{_{w}}Y$, then $X\perp_{_{w}}Z$.
\end{corollary}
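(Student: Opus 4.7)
The plan is to derive the conclusion directly from the triangle inequality for angles established in Theorem \ref{T.189}, together with the observation that weak orthogonality forces $\sin\Theta_{X,Y}$ to attain its maximum value $1$.

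First I would unpack the two hypotheses in terms of the cosine/sine of the relevant angles. The assumption $X\perp_{_{w}}Y$ means $\cos\Theta_{_{X,Y}}=0$, and since $0\le\Theta_{_{X,Y}}\le\pi$, this forces $\Theta_{_{X,Y}}=\pi/2$ and therefore $\sin\Theta_{_{X,Y}}=1$. On the other hand, $Z\parallel_{_{w}}Y$ gives $\sin\Theta_{_{Z,Y}}=0$ by Definition \ref{de.31}(ii).

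Next I would apply Theorem \ref{T.189} to the triple $X,Y,Z$ (with $Z$ playing the role of the middle operator) to obtain
\begin{align*}
1=\sin\Theta_{_{X,Y}}\le \sin\Theta_{_{X,Z}}+\sin\Theta_{_{Z,Y}}=\sin\Theta_{_{X,Z}}.
\end{align*}
Since $\sin\Theta_{_{X,Z}}\le 1$ in general (as $0\le\Theta_{_{X,Z}}\le\pi$), we conclude $\sin\Theta_{_{X,Z}}=1$, whence $\Theta_{_{X,Z}}=\pi/2$ and $\cos\Theta_{_{X,Z}}=0$, i.e., $X\perp_{_{w}}Z$.

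There is no real obstacle here: the entire argument is a one-line squeeze from the triangle inequality, and the only point to be careful about is translating the two definitions from Definition \ref{de.31} into the equalities $\sin\Theta_{_{X,Y}}=1$ and $\sin\Theta_{_{Z,Y}}=0$, which relies on Remark \ref{R.12}(i) and the range of $\Theta$.
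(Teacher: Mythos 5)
Your proof is correct and follows essentially the same route as the paper: both translate the hypotheses into $\sin\Theta_{_{X,Y}}=1$ and $\sin\Theta_{_{Z,Y}}=0$ and then squeeze $\sin\Theta_{_{X,Z}}=1$ out of the triangle inequality of Theorem \ref{T.189}. Your version merely makes the intermediate steps slightly more explicit.
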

\begin{proof}
Suppose that $X\perp_{_{w}}Y$ and $Z\parallel_{_{w}}Y$. Then $\cos\Theta_{_{X,Y}} = 0$ and $\sin\Theta_{_{Z,Y}} = 0$.
Hence $\sin\Theta_{_{X,Y}} = 1$ and $\sin\Theta_{_{Z,Y}} = 0$.
Now, by Theorem \ref{T.189}, we get $\sin\Theta_{_{X,Z}}=1$, or equivalently, $\cos\Theta_{_{X,Z}}=0$.
Thus $X\perp_{_{w}}Z$.
\end{proof}
We now state a interesting inequality based on some ideas of \cite[Lemma~4.1]{Kennedy.Skoufranis}.
\begin{theorem}\label{T.11}
If $X$ and $Y$ are Hilbert--Schmidt operators on $\mathcal{H}$, then
\begin{align}\label{T.I.1.11}
|\langle X, Y\rangle|^2 \leq \langle |X^*|, |Y^*|\rangle\langle |X|, |Y|\rangle.
\end{align}
Moreover, the inequality in \eqref{T.I.1.11} becomes an equality if and only if
$\zeta Y^*X$ is positive for some scalar $\zeta$.
\end{theorem}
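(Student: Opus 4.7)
The plan is to recognize \eqref{T.I.1.11} as an instance of the Cauchy--Schwarz inequality in the Hilbert space $\mathcal{C}_2(\mathcal{H})$, applied to a suitably factored form of $\langle X, Y\rangle = {\rm Tr}(Y^*X)$. Using the polar decompositions $X = U|X|$ and $Y = V|Y|$, identity \eqref{I.1.1} gives $Y^*X = |Y|V^*U|X|$. Splitting each positive factor into square roots and invoking cyclicity of the trace, I would rewrite $\langle X, Y\rangle = {\rm Tr}(C^*D)$, where
$$C := |Y|^{1/2}|X|^{1/2} \quad\text{and}\quad D := |Y|^{1/2}V^*U|X|^{1/2}.$$

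The Cauchy--Schwarz inequality in $\mathcal{C}_2(\mathcal{H})$ then yields $|\langle X, Y\rangle|^2 \leq \|C\|_2^2\,\|D\|_2^2$, so the remaining work is to identify the two norms with the quantities appearing in \eqref{T.I.1.11}. Cyclicity of the trace immediately gives $\|C\|_2^2 = {\rm Tr}(|X|^{1/2}|Y||X|^{1/2}) = {\rm Tr}(|Y||X|) = \langle |X|, |Y|\rangle$. For $\|D\|_2^2$, the key identity $|X^*| = U|X|U^*$ from \eqref{I.1.1}, together with its analogue $|Y^*| = V|Y|V^*$, transforms
$$\|D\|_2^2 = {\rm Tr}(|X|^{1/2}U^*V|Y|V^*U|X|^{1/2}) = {\rm Tr}(U|X|U^* \cdot V|Y|V^*) = {\rm Tr}(|X^*||Y^*|) = \langle |X^*|, |Y^*|\rangle,$$
which combined with the Cauchy--Schwarz bound establishes \eqref{T.I.1.11}.

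For the equality characterization, the Cauchy--Schwarz inequality attains equality precisely when $C$ and $D$ are linearly dependent in $\mathcal{C}_2(\mathcal{H})$. Translating $C = \mu D$ back through the polar decomposition produces an identity of the form $\mu Y^*X = |Y||X|$, which I would then reinterpret as the condition that $\zeta Y^*X$ is positive for some scalar $\zeta$. I expect this final translation to be the main obstacle: moving between the algebraic relation $\mu Y^*X = |Y||X|$ and a genuine positivity assertion on $\zeta Y^*X$ requires a careful analysis of the partial isometry $V^*U$ and of the interaction of the ranges of $|X|$ and $|Y|$, and this is where most of the delicate work of the proof should lie.
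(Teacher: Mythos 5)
Your proposal is correct and follows essentially the same route as the paper's proof: the same factorization $\langle X, Y\rangle = {\rm Tr}\big(|X|^{1/2}|Y|V^*U|X|^{1/2}\big)$ into the pair $|Y|^{1/2}|X|^{1/2}$ and $|Y|^{1/2}V^*U|X|^{1/2}$, the same Cauchy--Schwarz step in $\mathcal{C}_2(\mathcal{H})$, and the same identification of the two norms using $|X^*| = U|X|U^*$ and $|Y^*| = V|Y|V^*$. The equality-case translation you flag as the delicate point is treated by the paper exactly as tersely as you sketch it --- it passes from $|Y|\,|X| = \zeta |Y|V^*U|X|$ to $|Y|\,|X| = \zeta Y^*X$ via \eqref{I.1.1} and stops there --- so your sketch is not missing anything the paper actually supplies.
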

\begin{proof}
Let $X = U|X|$ and $Y = V|Y|$ be the polar
decompositions of $X$ and $Y$, respectively.
By \eqref{I.1.1}, \eqref{I.1.5.1} and the Cauchy–-Schwarz inequality, we have
\begingroup\makeatletter\def\f@size{11}\check@mathfonts
\begin{align*}
|\langle X, Y\rangle|^2 &= \Big|{\rm Tr}\big(|Y|V^*U|X|\big)\Big|^2
\\&= \Big|{\rm Tr}\big(|X|^{1/2}|Y|V^*U|X|^{1/2}\big)\Big|^2
\\& = \Big|\Big\langle |Y|^{1/2}V^*U|X|^{1/2}, |Y|^{1/2}|X|^{1/2}\Big\rangle\Big|^2
\\& \leq {\Big\||Y|^{1/2}V^*U|X|^{1/2}\Big\|}^2_{_2}{\Big\||Y|^{1/2}|X|^{1/2}\Big\|}^2_{_2}
\\& = {\rm Tr}\Big(|X|^{1/2}U^*V|Y|V^*U|X|^{1/2}\Big){\rm Tr}\Big(|X|^{1/2}|Y||X|^{1/2}\Big)
\\& = {\rm Tr}\Big(V|Y|V^*U|X|U^*\Big){\rm Tr}\Big(|Y||X|\Big)
\\& = {\rm Tr}\Big(|Y^*||X^*|\Big){\rm Tr}\Big(|Y||X|\Big)
\\& = {\rm Tr}\Big(|Y^*|^*|X^*|\Big){\rm Tr}\Big(|Y|^*|X|\Big)
= \langle |X^*|, |Y^*|\rangle\langle |X|, |Y|\rangle.
\end{align*}
\endgroup
The inequality in \eqref{T.I.1.11} becomes an equality if and only if
\begin{align*}
|Y|^{1/2}|X|^{1/2} = \zeta|Y|^{1/2}V^*U|X|^{1/2}
\end{align*}
for some $\zeta\in\mathbb{C}$, and hence $|Y|\,|X| = \zeta|Y|V^*U|X|$.
So, by \eqref{I.1.1}, we obtain $|Y|\,|X| = \zeta Y^*X$.
Therefore, the inequality in \eqref{T.I.1.11} becomes an equality if and only if
$\zeta Y^*X$ is positive for some $\zeta\in \mathbb{C}$.
\end{proof}
Here we present one of the main results of this paper.
In fact, the following theorem enables us to provide alternative proof of some well-known inequalities for the Hilbert--Schmidt norm.
In particular, we will use the following theorem to prove Lee's conjecture \cite[p.~584]{Lee}
on the sum of the square roots of operators.
\begin{theorem}\label{T.18}
For non-zero Hilbert--Schmidt operators $X$ and $Y$ on $\mathcal{H}$ the following properties hold.
\begin{itemize}
\item[(i)] $\cos^2\Theta_{_{X,Y}} \leq \cos\Theta_{_{|X^*|,|Y^*|}}\cos\Theta_{_{|X|,|Y|}}$.
\item[(ii)] $\big|\cos\Theta_{_{X,Y}}\big|\leq \min\left\{\sqrt{\cos\Theta_{_{|X^*|,|Y^*|}}}, \sqrt{\cos\Theta_{_{|X|,|Y|}}}\right\}$.
\item[(iii)] $\sin^2\Theta_{_{|X^*|,|Y^*|}} + \sin^2\Theta_{_{|X|,|Y|}} \leq 2\sin^2\Theta_{_{X,Y}}$.
\end{itemize}
\end{theorem}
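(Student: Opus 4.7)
The plan is to derive part (i) as a direct consequence of the key Cauchy--Schwarz-type estimate in Theorem \ref{T.11}, and then to obtain parts (ii) and (iii) as essentially one-line corollaries of (i). The heavy lifting has already been done in Theorem \ref{T.11}; the role of Theorem \ref{T.18} is to re-express that estimate in terms of the geometric quantity $\cos\Theta$ by dividing out by the right combination of Hilbert--Schmidt norms.

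For part (i), I would start from the inequality
\[
|\langle X, Y\rangle|^2 \leq \langle |X^*|, |Y^*|\rangle\langle |X|, |Y|\rangle
\]
of Theorem \ref{T.11}. Since $|X^*|,|Y^*|$ are positive, $\langle |X^*|, |Y^*|\rangle = {\rm Tr}(|Y^*|\,|X^*|)$ is a nonnegative real number (being the trace of a product of two positive operators), and likewise for $\langle |X|, |Y|\rangle$; in particular each coincides with its own real part. Bounding $({\rm Re}\langle X,Y\rangle)^2 \leq |\langle X,Y\rangle|^2$ on the left and dividing through by $\|X\|_{_2}^2\|Y\|_{_2}^2$, I would then invoke the norm identity \eqref{I.2.1} to replace the denominators on the right by $\||X^*|\|_{_2}\||Y^*|\|_{_2}$ and $\||X|\|_{_2}\||Y|\|_{_2}$ respectively. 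The two resulting factors are exactly $\cos\Theta_{_{|X^*|,|Y^*|}}$ and $\cos\Theta_{_{|X|,|Y|}}$ (each nonnegative by Remark \ref{R.12}(ii)), while the left side becomes $\cos^2\Theta_{_{X,Y}}$, which yields (i).

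Part (ii) is then immediate: by Remark \ref{R.12}(ii) each of $\cos\Theta_{_{|X^*|,|Y^*|}}$ and $\cos\Theta_{_{|X|,|Y|}}$ lies in $[0,1]$, so dropping either factor on the right of (i) preserves the inequality, giving $\cos^2\Theta_{_{X,Y}} \leq \cos\Theta_{_{|X^*|,|Y^*|}}$ and $\cos^2\Theta_{_{X,Y}} \leq \cos\Theta_{_{|X|,|Y|}}$; taking square roots produces (ii). For part (iii), I would apply the AM--GM bound $2ab \leq a^2+b^2$ to the right-hand side of (i) with $a=\cos\Theta_{_{|X^*|,|Y^*|}}$, $b=\cos\Theta_{_{|X|,|Y|}}$, obtaining
\[
2\cos^2\Theta_{_{X,Y}} \leq \cos^2\Theta_{_{|X^*|,|Y^*|}} + \cos^2\Theta_{_{|X|,|Y|}},
\]
and then substitute $\cos^2 = 1-\sin^2$ and rearrange to get (iii).

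There is no genuine obstacle here once Theorem \ref{T.11} is in hand; the only point that needs a moment of care is justifying that the two factors on the right of the bound are real and nonnegative, so that dividing by $\|X\|_{_2}^2\|Y\|_{_2}^2$ really reproduces a product of cosines rather than some more general complex expression. Everything else is algebraic manipulation using the norm identities \eqref{I.2.1}.
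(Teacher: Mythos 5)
Your proof is correct and follows essentially the same route as the paper: part (i) is obtained by combining Theorem \ref{T.11} with $({\rm Re}\langle X,Y\rangle)^2\le|\langle X,Y\rangle|^2$, division by ${\|X\|}_{_2}^2{\|Y\|}_{_2}^2$ and the identities \eqref{I.2.1}, while (ii) follows by dropping one factor and (iii) by the same arithmetic--geometric mean step $\cos^2 a+\cos^2 b\ge 2\cos a\cos b$ used in the paper. Your added remark justifying that $\langle |X^*|,|Y^*|\rangle$ and $\langle |X|,|Y|\rangle$ are real and nonnegative is a point the paper leaves implicit, but the arguments are otherwise identical.
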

\begin{proof}
(i) By \eqref{I.2.1}, \eqref{T.I.2.11}, Definition \ref{de.31} and Theorem \ref{T.11} we have
\begin{align*}
\cos^2\Theta_{_{X,Y}} &= \left(\frac{{\rm Re}\langle X, Y\rangle}{{\|X\|}_{_2}{\|Y\|}_{_2}}\right)^2
\\& \leq \frac{|\langle X, Y\rangle|^2}{{\|X\|}^2_{_2}{\|Y\|}^2_{_2}}
\\& \leq \frac{\langle |X^*|, |Y^*|\rangle}{{\|X\|}_{_2}{\|Y\|}_{_2}}
\,\frac{\langle |X|, |Y|\rangle}{{\|X\|}_{_2}{\|Y\|}_{_2}}
\\& = \frac{{\rm Re}\langle |X^*|, |Y^*|\rangle}{{\big\|\,|X^*|\,\big\|}_{_2}{\big\|\,|Y^*|\,\big\|}_{_2}}
\,\frac{{\rm Re}\langle |X|, |Y|\rangle}{{\big\|\,|X|\,\big\|}_{_2}{\big\|\,|Y|\,\big\|}_{_2}}
\\& = \cos\Theta_{_{|X^*|,|Y^*|}}\cos\Theta_{_{|X|,|Y|}}.
\end{align*}

(ii) The proof follows immediately from (i).

(iii) By the arithmetic-geometric mean inequality and (i) we have
\begin{align*}
\sin^2\Theta_{_{|X^*|,|Y^*|}} + \sin^2\Theta_{_{|X|,|Y|}}& = 2-\Big(\cos^2\Theta_{_{|X^*|,|Y^*|}} + \cos^2\Theta_{_{|X|,|Y|}}\Big)
\\& \leq 2-2\cos\Theta_{_{|X^*|,|Y^*|}}\cos\Theta_{_{|X|,|Y|}}
\\& \leq 2 - 2 \cos^2\Theta_{_{X,Y}} = 2\sin^2\Theta_{_{X,Y}}.
\end{align*}
\end{proof}
As consequences of the preceding theorem, we have the following results.
\begin{corollary}\label{T.19}
Let $X, Y\in \mathcal{C}_2(\mathcal{H})\setminus\{0\}$. If $|X^*|\perp_{_{w}}|Y^*|$ or $|X|\perp_{_{w}}|Y|$,
then $X\perp_{_{w}}Y$.
\end{corollary}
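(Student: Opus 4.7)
The statement is an immediate corollary of Theorem \ref{T.18}, so the plan is essentially a one-line deduction; what I need to decide is which part of that theorem is cleanest to invoke and to verify that the hypotheses actually force the relevant square root (or product) to vanish.

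My preferred route is to apply Theorem \ref{T.18}(i), which states
\begin{align*}
\cos^2\Theta_{_{X,Y}} \leq \cos\Theta_{_{|X^*|,|Y^*|}}\cos\Theta_{_{|X|,|Y|}}.
\end{align*}
Since $|X|$, $|Y|$, $|X^*|$, $|Y^*|$ are positive operators, Remark \ref{R.12}(ii) ensures that both factors on the right are nonnegative, so the product is zero as soon as either factor is zero. If we are given $|X^*|\perp_{_{w}}|Y^*|$, then by Definition \ref{de.31}(i) the first factor $\cos\Theta_{_{|X^*|,|Y^*|}}$ vanishes; if instead we are given $|X|\perp_{_{w}}|Y|$, then the second factor vanishes. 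In either case the right-hand side of the displayed inequality equals $0$, whence $\cos^2\Theta_{_{X,Y}} = 0$, i.e.\ $\cos\Theta_{_{X,Y}} = 0$. By Definition \ref{de.31}(i) this is precisely $X\perp_{_{w}}Y$.

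Alternatively, the same conclusion follows directly from Theorem \ref{T.18}(ii), since under either hypothesis the minimum on the right equals $\sqrt{0}=0$, forcing $\big|\cos\Theta_{_{X,Y}}\big|=0$. There is no real obstacle in the argument; the only point requiring a brief justification is that the relevant cosines are nonnegative so that vanishing of one of them genuinely annihilates the bound (and, in the version using (ii), so that the square roots are defined), and this is supplied by Remark \ref{R.12}(ii). The proof will therefore be just a few lines invoking Theorem \ref{T.18} and the definition of $\perp_{_{w}}$.
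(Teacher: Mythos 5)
Your proposal is correct and follows the same route as the paper: the paper's proof likewise observes that either hypothesis makes the product $\cos\Theta_{_{|X^*|,|Y^*|}}\cos\Theta_{_{|X|,|Y|}}$ vanish and then invokes Theorem \ref{T.18}(i) to conclude $\cos\Theta_{_{X,Y}}=0$. Your additional remark on nonnegativity of the cosines is harmless but not needed for route (i), since the product vanishes as soon as one factor does and $\cos^2\Theta_{_{X,Y}}\leq 0$ already forces the conclusion.
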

\begin{proof}
Let $|X^*|\perp_{_{w}}|Y^*|$ or $|X|\perp_{_{w}}|Y|$. Then $\cos\Theta_{_{|X^*|,|Y^*|}}\cos\Theta_{_{|X|,|Y|}} = 0$.
Hence, by Theorem \ref{T.18} (i), we obtain $\cos\Theta_{_{X,Y}} = 0$, or equivalently, $X\perp_{_{w}}Y$.
\end{proof}
\begin{corollary}\label{T.20}
Let $X, Y\in \mathcal{C}_2(\mathcal{H})\setminus\{0\}$.
If $X\parallel_{_{w}}Y$, then $|X^*|\parallel_{_{w}}|Y^*|$ and $|X|\parallel_{_{w}}|Y|$.
\end{corollary}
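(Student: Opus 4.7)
The plan is to derive this directly from part (iii) of Theorem \ref{T.18}, which is essentially tailor-made for the conclusion. By hypothesis, $X \parallel_{_{w}} Y$ means $\sin \Theta_{_{X,Y}} = 0$, so the right-hand side of the inequality
\[
\sin^2\Theta_{_{|X^*|,|Y^*|}} + \sin^2\Theta_{_{|X|,|Y|}} \leq 2\sin^2\Theta_{_{X,Y}}
\]
collapses to $0$. Since the two terms on the left are each nonnegative, each one must individually vanish.

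I would then read off the conclusion: $\sin\Theta_{_{|X^*|,|Y^*|}}=0$ gives $|X^*| \parallel_{_{w}} |Y^*|$, and $\sin\Theta_{_{|X|,|Y|}}=0$ gives $|X| \parallel_{_{w}} |Y|$, by Definition \ref{de.31}(ii). A small sanity check is that the hypotheses of Theorem \ref{T.18} require the four operators $|X^*|, |Y^*|, |X|, |Y|$ to be non-zero as Hilbert--Schmidt operators, which is guaranteed by $X, Y \in \mathcal{C}_2(\mathcal{H}) \setminus \{0\}$ together with \eqref{I.2.1}.

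As an alternative route, one could use part (i) of Theorem \ref{T.18}: the assumption $\sin\Theta_{_{X,Y}}=0$ forces $\cos^2\Theta_{_{X,Y}}=1$, hence
\[
1 \leq \cos\Theta_{_{|X^*|,|Y^*|}}\cos\Theta_{_{|X|,|Y|}}.
\]
Since $|X^*|, |Y^*|, |X|, |Y|$ are positive, Remark \ref{R.12}(ii) ensures that both cosines lie in $[0,1]$, and the only way their product can reach $1$ is for each to equal $1$, again yielding the two weak parallelism relations. There is no real obstacle here; the main point is simply to recognize that Theorem \ref{T.18}(iii) (or equivalently (i) combined with the positivity observation in Remark \ref{R.12}(ii)) already packages the required implication, so the proof reduces to a one-line deduction.
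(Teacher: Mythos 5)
Your proof is correct and follows exactly the paper's own argument: apply Theorem \ref{T.18}(iii) with $\sin\Theta_{_{X,Y}}=0$ and conclude that both nonnegative terms on the left must vanish. The alternative route via Theorem \ref{T.18}(i) and Remark \ref{R.12}(ii) is also valid but adds nothing essential.
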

\begin{proof}
Suppose that $X\parallel_{_{w}}Y$. Then $\sin\Theta_{_{X,Y}} = 0$.
So, by Theorem \ref{T.18} (iii), we obtain $\sin\Theta_{_{|X^*|,|Y^*|}} = \sin\Theta_{_{|X|,|Y|}} = 0$.
Therefore, $|X^*|\parallel_{_{w}}|Y^*|$ and $|X|\parallel_{_{w}}|Y|$.
\end{proof}
\section{Inequalities for the Hilbert--Schmidt norm}
In this section, by using Theorems \ref{T.14} and \ref{T.18},
we provide alternative proof of some well-known inequalities for the Hilbert--Schmidt norm.
First we present a considerably briefer proof of an extension of the Araki--Yamagami inequality \cite{Araki.Yamagami}
obtained by F.~Kittaneh \cite[Theorem~2]{Kit.3}.
\begin{theorem}\label{T.144}
If $X$ and $Y$ are operators on $\mathcal{H}$, then
\begin{align*}
{\big\|\,|X^*| - |Y^*|\,\big\|}^2_{_2} + {\big\|\,|X| - |Y|\,\big\|}^2_{_2} \leq 2{\|X - Y\|}^2_{_2}.
\end{align*}
\end{theorem}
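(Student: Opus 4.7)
The plan is to expand each Hilbert--Schmidt norm on both sides using the cosine theorem (Theorem \ref{T.14}), use the identity ${\|X\|}_{_2}={\big\|\,|X|\,\big\|}_{_2}={\big\|\,|X^*|\,\big\|}_{_2}$ from \eqref{I.2.1} to match the $\|X\|_2^2$ and $\|Y\|_2^2$ terms on the two sides, and then reduce the claim to a pure trigonometric inequality among the three angles $\Theta_{_{X,Y}}$, $\Theta_{_{|X|,|Y|}}$, $\Theta_{_{|X^*|,|Y^*|}}$. Throwing away the trivial cases where $X=0$ or $Y=0$ (which make the inequality an equality), I may assume $X,Y$ are both nonzero, and if either $|X|-|Y|$ or $|X^*|-|Y^*|$ vanishes we only lose a nonnegative term on the left, so the nondegenerate reduction is legitimate.

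Applying Theorem \ref{T.14} to each of the three differences and summing gives
\begin{align*}
{\big\|\,|X^*| - |Y^*|\,\big\|}^2_{_2} + {\big\|\,|X| - |Y|\,\big\|}^2_{_2}
&= 2{\|X\|}^2_{_2} + 2{\|Y\|}^2_{_2} - 2{\|X\|}_{_2}{\|Y\|}_{_2}\bigl(\cos\Theta_{_{|X^*|,|Y^*|}}+\cos\Theta_{_{|X|,|Y|}}\bigr),\\
2{\|X - Y\|}^2_{_2}
&= 2{\|X\|}^2_{_2} + 2{\|Y\|}^2_{_2} - 4{\|X\|}_{_2}{\|Y\|}_{_2}\cos\Theta_{_{X,Y}}.
\end{align*}
Therefore the desired inequality collapses to showing
\begin{align*}
\cos\Theta_{_{|X^*|,|Y^*|}}+\cos\Theta_{_{|X|,|Y|}} \;\geq\; 2\cos\Theta_{_{X,Y}}.
\end{align*}

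For this last step, I would use that $|X|,|Y|,|X^*|,|Y^*|$ are positive operators, so by Remark \ref{R.12}(ii) the cosines $\cos\Theta_{_{|X^*|,|Y^*|}}$ and $\cos\Theta_{_{|X|,|Y|}}$ are both nonnegative. The AM--GM inequality therefore yields
\begin{align*}
\cos\Theta_{_{|X^*|,|Y^*|}}+\cos\Theta_{_{|X|,|Y|}} \;\geq\; 2\sqrt{\cos\Theta_{_{|X^*|,|Y^*|}}\cos\Theta_{_{|X|,|Y|}}},
\end{align*}
and Theorem \ref{T.18}(i) gives $\cos\Theta_{_{|X^*|,|Y^*|}}\cos\Theta_{_{|X|,|Y|}}\geq \cos^2\Theta_{_{X,Y}}$, so the right-hand side is $\geq 2|\cos\Theta_{_{X,Y}}|\geq 2\cos\Theta_{_{X,Y}}$, closing the argument.

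The main obstacle is conceptual rather than computational: one has to notice that the cosine theorem converts both sides into the \emph{same} expression in ${\|X\|}_{_2}^2+{\|Y\|}_{_2}^2$ (thanks to \eqref{I.2.1}), so the problem purely reduces to bounding a sum of two cosines by $2\cos\Theta_{_{X,Y}}$. Once that is seen, Theorem \ref{T.18}(i) together with AM--GM and the positivity of $|X|,|Y|,|X^*|,|Y^*|$ does the rest; no further operator-theoretic input is needed beyond what is already in Section~2.
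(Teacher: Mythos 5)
Your proposal is correct and follows essentially the same route as the paper: expand both sides via the cosine theorem (Theorem \ref{T.14}) together with \eqref{I.2.1}, reduce to $\cos\Theta_{_{|X^*|,|Y^*|}}+\cos\Theta_{_{|X|,|Y|}}\geq 2\cos\Theta_{_{X,Y}}$, and close with AM--GM plus Theorem \ref{T.18}(i). The only difference is presentational --- you isolate the reduced trigonometric inequality explicitly, while the paper keeps everything in one chain of estimates.
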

\begin{proof}
Since the desired inequality trivially holds when $X= 0$ or $Y=0$, we may assume $X, Y\neq0$.
By \eqref{I.2.1}, Theorem \ref{T.14}, Theorem \ref{T.18}(i) and the arithmetic-geometric mean inequality we have
\begingroup\makeatletter\def\f@size{10}\check@mathfonts
\begin{align*}
{\big\|\,|X^*| - |Y^*|\,\big\|}^2_{_2} + {\big\|\,|X| - |Y|\,\big\|}^2_{_2}
& = {\big\|\,|X^*|\,\big\|}^2_{_2} + {\big\|\,|Y^*|\,\big\|}^2_{_2} - 2{\big\|\,|X^*|\,\big\|}_{_2}{\big\|\,|Y^*|\,\big\|}_{_2}\cos\Theta_{_{|X^*|,|Y^*|}}
\\& \qquad + {\big\|\,|X|\,\big\|}^2_{_2} + {\big\|\,|Y|\,\big\|}^2_{_2}
- 2{\big\|\,|X|\,\big\|}_{_2}{\big\|\,|Y|\,\big\|}_{_2}\cos\Theta_{_{|X|,|Y|}}
\\& = 2{\|X\|}^2_{_2} + 2{\|Y\|}^2_{_2} - 2{\|X\|}_{_2}{\|Y\|}_{_2}\Big(\cos\Theta_{_{|X^*|,|Y^*|}} + \cos\Theta_{_{|X|,|Y|}}\Big)
\\& \leq 2{\|X\|}^2_{_2} + 2{\|Y\|}^2_{_2} - 4{\|X\|}_{_2}{\|Y\|}_{_2}\sqrt{\cos\Theta_{_{|X^*|,|Y^*|}}\cos\Theta_{_{|X|,|Y|}}}
\\& \leq 2{\|X\|}^2_{_2} + 2{\|Y\|}^2_{_2} - 4{\|X\|}_{_2}{\|Y\|}_{_2}\big|\cos\Theta_{_{X,Y}}\big|
\\& \leq 2{\|X\|}^2_{_2} + 2{\|Y\|}^2_{_2} - 4{\|X\|}_{_2}{\|Y\|}_{_2}\cos\Theta_{_{X,Y}}
= 2{\|X - Y\|}^2_{_2}.
\end{align*}
\endgroup
\end{proof}
As an immediate consequence of Theorem \ref{T.144},
we get the Araki--Yamagami inequality \cite[Theorem~1]{Araki.Yamagami}.
\begin{corollary}\label{C.147}
If $X$ and $Y$ are operators on $\mathcal{H}$, then
\begin{align*}
{\big\|\,|X| - |Y|\,\big\|}_{_2} \leq \sqrt{2}{\|X - Y\|}_{_2}.
\end{align*}
\end{corollary}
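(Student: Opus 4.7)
The plan is to deduce Corollary \ref{C.147} directly from Theorem \ref{T.144}, which has just been established. Since Theorem \ref{T.144} asserts that
\[
{\big\|\,|X^*| - |Y^*|\,\big\|}^2_{_2} + {\big\|\,|X| - |Y|\,\big\|}^2_{_2} \leq 2{\|X - Y\|}^2_{_2},
\]
the term ${\big\|\,|X^*| - |Y^*|\,\big\|}^2_{_2}$ is non-negative, and dropping it on the left-hand side can only make the inequality weaker. Hence one immediately obtains
\[
{\big\|\,|X| - |Y|\,\big\|}^2_{_2} \leq 2{\|X - Y\|}^2_{_2}.
\]

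The final step is simply to take square roots of both sides, which is legitimate since both sides are non-negative, yielding the claimed bound
\[
{\big\|\,|X| - |Y|\,\big\|}_{_2} \leq \sqrt{2}\,{\|X - Y\|}_{_2}.
\]

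There is no genuine obstacle here: the entire content of the corollary is packed into Theorem \ref{T.144}, and the passage from the theorem to the corollary is a one-line observation. Consequently, my proof proposal is essentially the two lines above and does not require any further machinery, such as polar decompositions or the angle inequalities of Section 2, beyond what was already used to prove Theorem \ref{T.144}.
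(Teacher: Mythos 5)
Your proposal is correct and is exactly the argument the paper intends: the corollary is stated as an immediate consequence of Theorem \ref{T.144}, obtained by discarding the non-negative term ${\big\|\,|X^*| - |Y^*|\,\big\|}^2_{_2}$ and taking square roots. Nothing further is needed.
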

\begin{remark}
In \cite{Araki.Yamagami}, H.~Araki and S.~Yamagami remarked that $\sqrt{2}$ is the best possible
coefficient for a general $X$ and $Y$. Now let $X$ and $Y$ be normal operators.
Since $X$ and $Y$ are normal operators, the spectral theorem (see \cite{Mu}) implies that
$|X^*| = |X|$, $|Y^*| = |Y|$ and hence, by Theorem \ref{T.144} we obtain
\begin{align*}
{\|\,|X| - |Y|\,\|}_{_2} \leq {\|X - Y\|}_{_2}.
\end{align*}
Therefore, if $X$ and $Y$ are restricted to be normal,
then the best coefficient in the Araki--Yamagami inequality is $1$ instead of $\sqrt{2}$.
\end{remark}
The following result is a special case of \cite[Theorem~2.1]{Lee}.
\begin{theorem}\label{T.148}
If $X$ and $Y$ are operators on $\mathcal{H}$, then
\begin{align*}
{\|X + Y\|}^2_{_2} \leq {\big\|\,|X^*| + |Y^*|\,\big\|}_{_2}{\big\|\,|X| + |Y|\,\big\|}_{_2} .
\end{align*}
\end{theorem}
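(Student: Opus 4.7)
My plan is to expand both sides of the inequality using the cosine theorem (Theorem \ref{T.14}) and then close the resulting estimate with Theorem \ref{T.18}(i) followed by the AM--GM inequality. The cases $X=0$ or $Y=0$ trivially give equality, so I would first reduce to $X, Y\neq 0$.

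The first step is to apply Theorem \ref{T.14} to write
\begin{align*}
{\|X + Y\|}^2_{_2} = {\|X\|}^2_{_2} + {\|Y\|}^2_{_2} + 2{\|X\|}_{_2}{\|Y\|}_{_2}\cos\Theta_{_{X,Y}},
\end{align*}
and, using \eqref{I.2.1} to convert the Hilbert--Schmidt norms of $|X^*|, |Y^*|, |X|, |Y|$ into those of $X, Y$,
\begin{align*}
{\big\|\,|X^*| + |Y^*|\,\big\|}^2_{_2} &= {\|X\|}^2_{_2} + {\|Y\|}^2_{_2} + 2{\|X\|}_{_2}{\|Y\|}_{_2}\cos\Theta_{_{|X^*|,|Y^*|}}, \\
{\big\|\,|X| + |Y|\,\big\|}^2_{_2} &= {\|X\|}^2_{_2} + {\|Y\|}^2_{_2} + 2{\|X\|}_{_2}{\|Y\|}_{_2}\cos\Theta_{_{|X|,|Y|}}.
\end{align*}
Since $|X^*|, |Y^*|, |X|, |Y|$ are positive operators, Remark \ref{R.12}(ii) ensures $\cos\Theta_{_{|X^*|,|Y^*|}}$ and $\cos\Theta_{_{|X|,|Y|}}$ are non-negative, so Theorem \ref{T.18}(i) upgrades to
\begin{align*}
\cos\Theta_{_{X,Y}} \leq \big|\cos\Theta_{_{X,Y}}\big| \leq \sqrt{\cos\Theta_{_{|X^*|,|Y^*|}}\cos\Theta_{_{|X|,|Y|}}}.
\end{align*}

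Setting $s = {\|X\|}^2_{_2} + {\|Y\|}^2_{_2}$, $t = 2{\|X\|}_{_2}{\|Y\|}_{_2}$, $u = \cos\Theta_{_{|X^*|,|Y^*|}}$ and $v = \cos\Theta_{_{|X|,|Y|}}$, the claim reduces to
\begin{align*}
\bigl(s + t\sqrt{uv}\bigr)^2 \leq (s + tu)(s + tv),
\end{align*}
which expands to $st\bigl(u + v - 2\sqrt{uv}\bigr) \geq 0$ and is therefore just AM--GM on the non-negative reals $u, v$. Taking square roots then delivers the desired inequality. I do not foresee a genuine obstacle: the only subtle point is verifying $u, v \geq 0$ so that $\sqrt{uv}$ is meaningful, which is exactly Remark \ref{R.12}(ii), and from there the argument is routine algebra once Theorems \ref{T.14} and \ref{T.18}(i) are in hand.
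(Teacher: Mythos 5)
Your proof is correct and follows essentially the same route as the paper: both expand $\|X+Y\|_2^2$, $\|\,|X^*|+|Y^*|\,\|_2^2$ and $\|\,|X|+|Y|\,\|_2^2$ via Theorem \ref{T.14} and \eqref{I.2.1}, then combine Theorem \ref{T.18}(i) with the AM--GM inequality $u+v\geq 2\sqrt{uv}$ to close the gap. The only difference is organizational — you bound the left side upward by $s+t\sqrt{uv}$ and reduce to a clean scalar inequality, while the paper bounds the right-hand product downward term by term — but the ingredients and the key estimate are identical.
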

\begin{proof}
We may assume that $X, Y\neq0$ otherwise the desired inequality trivially holds.
By \eqref{I.2.1}, Theorem \ref{T.14}, Theorem \ref{T.18}(i) and the arithmetic-geometric mean inequality we have
\begingroup\makeatletter\def\f@size{10}\check@mathfonts
\begin{align*}
{\big\|\,|X^*| + |Y^*|\,\big\|}^2_{_2}{\big\|\,|X| + |Y|\,\big\|}^2_{_2}
& = \Big({\big\|\,|X^*|\,\big\|}^2_{_2} + {\big\|\,|Y^*|\,\big\|}^2_{_2} + 2{\big\|\,|X^*|\,\big\|}_{_2}{\big\|\,|Y^*|\,\big\|}_{_2}\cos\Theta_{_{|X^*|,|Y^*|}}\Big)
\\& \qquad \times \Big({\big\|\,|X|\,\big\|}^2_{_2} + {\big\|\,|Y|\,\big\|}^2_{_2} + 2{\big\|\,|X|\,\big\|}_{_2}{\big\|\,|Y|\,\big\|}_{_2}\cos\Theta_{_{|X|,|Y|}}\Big)
\\& = \Big({\|X\|}^2_{_2} + {\|Y\|}^2_{_2} + 2{\|X\|}_{_2}{\|X\|}_{_2}\cos\Theta_{_{|X^*|,|Y^*|}}\Big)
\\& \qquad \times \Big({\|X\|}^2_{_2} + {\|Y\|}^2_{_2} + 2{\|X\|}_{_2}{\|Y\|}_{_2}\cos\Theta_{_{|X|,|Y|}}\Big)
\\& = \Big({\|X\|}^2_{_2} + {\|Y\|}^2_{_2}\Big)^2 + 4{\|X\|}^2_{_2}{\|Y\|}^2_{_2}\cos\Theta_{_{|X^*|,|Y^*|}}\cos\Theta_{_{|X|,|Y|}}
\\& \qquad + 2{\|X\|}_{_2}{\|Y\|}_{_2}\Big({\|X\|}^2_{_2} + {\|Y\|}^2_{_2}\Big)\Big(\cos\Theta_{_{|X^*|,|Y^*|}}+\cos\Theta_{_{|X|,|Y|}}\Big)
\\& \geq \Big({\|X\|}^2_{_2} + {\|Y\|}^2_{_2}\Big)^2 + 4{\|X\|}^2_{_2}{\|Y\|}^2_{_2}\cos\Theta_{_{|X^*|,|Y^*|}}\cos\Theta_{_{|X|,|Y|}}
\\& \qquad + 4{\|X\|}_{_2}{\|Y\|}_{_2}\Big({\|X\|}^2_{_2} + {\|Y\|}^2_{_2}\Big)\sqrt{\cos\Theta_{_{|X^*|,|Y^*|}}\cos\Theta_{_{|X|,|Y|}}}
\\& \geq \Big({\|X\|}^2_{_2} + {\|Y\|}^2_{_2}\Big)^2 + 4{\|X\|}^2_{_2}{\|Y\|}^2_{_2}\cos^2\Theta_{_{X,Y}}
\\& \qquad + 4{\|X\|}_{_2}{\|Y\|}_{_2}\Big({\|X\|}^2_{_2} + {\|Y\|}^2_{_2}\Big)\big|\cos\Theta_{_{X,Y}}\big|
\\& \geq \Big({\|X\|}^2_{_2} + {\|Y\|}^2_{_2}\Big)^2 + 4{\|X\|}^2_{_2}{\|Y\|}^2_{_2}\cos^2\Theta_{_{X,Y}}
\\& \qquad + 4{\|X\|}_{_2}{\|Y\|}_{_2}\Big({\|X\|}^2_{_2} + {\|Y\|}^2_{_2}\Big)\cos\Theta_{_{X,Y}}
\\& = \Big({\|X\|}^2_{_2} + {\|Y\|}^2_{_2} + 2{\|X\|}_{_2}{\|Y\|}_{_2}\cos\Theta_{_{X,Y}} \Big)^2 = {\|X + Y\|}^4_{_2}.
\end{align*}
\endgroup
\end{proof}
Next, we provide alternative proof of an inequality for the Hilbert--Schmidt norm due to F.~Kittaneh \cite[Theorem~2.1]{Kit.5}.
\begin{theorem}\label{T.149}
If $X$ and $Y$ are operators on $\mathcal{H}$, then
\begin{align*}
{\big\|\,|X| - |Y|\,\big\|}^2_{_2} \leq {\|X + Y\|}_{_2}{\|X - Y\|}_{_2}.
\end{align*}
\end{theorem}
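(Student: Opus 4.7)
The plan is to follow the template used for Theorems~\ref{T.144} and \ref{T.148}: rewrite every norm appearing on both sides via the cosine identity of Theorem~\ref{T.14}, and then collapse the resulting scalar inequality using Theorem~\ref{T.18}(i). First I will dispose of the trivial case $X=0$ or $Y=0$, and abbreviate $a = \|X\|_2$, $b = \|Y\|_2$, $\alpha = \cos\Theta_{|X|,|Y|}$, $\beta = \cos\Theta_{|X^*|,|Y^*|}$, and $\gamma = \cos\Theta_{X,Y}$. By \eqref{I.2.1} the Hilbert--Schmidt norms of $|X|,|X^*|,|Y|,|Y^*|$ all reduce to $a$ and $b$, and by Remark~\ref{R.12}(ii) both $\alpha$ and $\beta$ lie in $[0,1]$ since $|X|,|X^*|,|Y|,|Y^*|$ are positive.

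Applying Theorem~\ref{T.14} to each of the three relevant operator pairs I obtain
\begin{align*}
\||X|-|Y|\|_2^2 &= a^2+b^2-2ab\alpha, \\
\|X+Y\|_2^2\,\|X-Y\|_2^2 &= (a^2+b^2)^2 - 4a^2b^2\gamma^2.
\end{align*}
Both sides of the desired inequality are non-negative, so squaring is legitimate. Expanding $(a^2+b^2-2ab\alpha)^2$ and cancelling the common term $(a^2+b^2)^2$ will reduce the problem to the scalar inequality
\begin{align*}
ab(\alpha^2+\gamma^2) \leq (a^2+b^2)\alpha.
\end{align*}

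To close this I invoke Theorem~\ref{T.18}(i), which gives $\gamma^2 \leq \alpha\beta \leq \alpha$ (the second estimate because $\beta \leq 1$). Since $\alpha \in [0,1]$ I also have $\alpha^2 \leq \alpha$, so $\alpha^2+\gamma^2 \leq 2\alpha$. Combined with the elementary bound $2ab \leq a^2+b^2$, this yields
\begin{align*}
ab(\alpha^2+\gamma^2) \leq 2ab\alpha \leq (a^2+b^2)\alpha,
\end{align*}
which is exactly the reduced inequality.

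The main point requiring care will be the reduction step itself: one has to verify that cancellation of $(a^2+b^2)^2$ really does leave $\alpha$ appearing \emph{linearly} on the right while producing the sum of squares $\alpha^2+\gamma^2$ on the left, so that the AM--GM estimate $2ab \leq a^2+b^2$ together with the bound $\gamma^2 \leq \alpha$ is just strong enough to finish. The hypothesis $\alpha \geq 0$, coming from positivity of $|X|$ and $|Y|$, is essential; no corresponding sign control on $\gamma$ is needed, since $\gamma$ enters only through $\gamma^2$.
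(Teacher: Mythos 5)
Your proof is correct and follows essentially the same route as the paper: both reduce, via the cosine identity of Theorem~\ref{T.14}, to the scalar inequality $ab(\alpha^2+\gamma^2)\leq(a^2+b^2)\alpha$ and close it with $\gamma^2\leq\alpha$ (the paper quotes Theorem~\ref{T.18}(ii) directly, which is the same as your $\gamma^2\leq\alpha\beta\leq\alpha$) together with $2ab\leq a^2+b^2$ and $\alpha\geq 0$. The only difference is presentational: the paper derives the auxiliary estimate first and then runs the chain of inequalities forward from $\|X+Y\|_2^2\|X-Y\|_2^2$, whereas you expand and cancel; the content is identical.
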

\begin{proof}
Since the desired inequality trivially holds when $X= 0$ or $Y=0$, we may assume $X, Y\neq0$.
By the arithmetic-geometric mean inequality we have
\begingroup\makeatletter\def\f@size{10}\check@mathfonts
\begin{align*}
{\|X\|}_{_2}{\|Y\|}_{_2}\big(1+\cos\Theta_{_{|X|,|Y|}}\big) \leq 2{\|X\|}_{_2}{\|Y\|}_{_2} \leq {\|X\|}^2_{_2} + {\|Y\|}^2_{_2}.
\end{align*}
\endgroup
Then, since $\cos\Theta_{_{|X|,|Y|}}\geq 0$, we obtain
\begingroup\makeatletter\def\f@size{10}\check@mathfonts
\begin{align*}
{\|X\|}_{_2}{\|Y\|}_{_2}\Big(\cos\Theta_{_{|X|,|Y|}} + \cos^2\Theta_{_{|X|,|Y|}}\Big)
\leq \cos\Theta_{_{|X|,|Y|}}\Big({\|X\|}^2_{_2} + {\|Y\|}^2_{_2}\Big),
\end{align*}
\endgroup
and so
\begingroup\makeatletter\def\f@size{10}\check@mathfonts
\begin{align}\label{T.I.1.149}
{\|X\|}_{_2}{\|Y\|}_{_2}\cos\Theta_{_{|X|,|Y|}}
\leq \cos\Theta_{_{|X|,|Y|}}\Big({\|X\|}^2_{_2} + {\|Y\|}^2_{_2}\Big) - {\|X\|}_{_2}{\|Y\|}_{_2}\cos^2\Theta_{_{|X|,|Y|}}.
\end{align}
\endgroup
Therefore by \eqref{I.2.1}, Theorem \ref{T.14}, Theorem \ref{T.18}(ii) and \eqref{T.I.1.149} we have
\begingroup\makeatletter\def\f@size{10}\check@mathfonts
\begin{align*}
{\|X + Y\|}^2_{_2}{\|X - Y\|}^2_{_2}
& = \Big({\|X\|}^2_{_2} + {\|Y\|}^2_{_2}\Big)^2 - 4{\|X\|}^2_{_2}{\|Y\|}^2_{_2}\cos^2\Theta_{_{X,Y}}
\\& \geq \Big({\|X\|}^2_{_2} + {\|Y\|}^2_{_2}\Big)^2 - 4{\|X\|}^2_{_2}{\|Y\|}^2_{_2}\cos\Theta_{_{|X|,|Y|}}
\\& \geq \Big({\|X\|}^2_{_2} + {\|Y\|}^2_{_2}\Big)^2 - 4{\|X\|}_{_2}{\|Y\|}_{_2}\Big({\|X\|}^2_{_2} + {\|Y\|}^2_{_2}\Big)\cos\Theta_{_{|X|,|Y|}}
\\& \qquad \qquad \qquad \qquad + 4{\|X\|}^2_{_2}{\|Y\|}^2_{_2}\cos^2\Theta_{_{|X|,|Y|}}
\\& = \Big({\|X\|}^2_{_2} + {\|Y\|}^2_{_2} - 2{\|X\|}_{_2}{\|Y\|}_{_2}\cos\Theta_{_{|X|,|Y|}}\Big)^2
\\& = \Big({\big\|\,|X|\,\big\|}^2_{_2} + {\big\|\,|Y|\,\big\|}^2_{_2} - 2{\big\|\,|X|\,\big\|}_{_2}{\big\|\,|Y|\,\big\|}_{_2}\cos\Theta_{_{|X|,|Y|}}\Big)^2
= {\big\|\,|X| - |Y|\,\big\|}^4_{_2}.
\end{align*}
\endgroup
\end{proof}
The following result may be stated as well.
\begin{theorem}\label{T.1406}
If $X$ and $Y$ are operators on $\mathcal{H}$, then
\begin{align*}
{\big\|\,|X^*| + |Y^*|\,\big\|}_{_2} \leq \sqrt{2}{\big\|\,|X| + |Y|\,\big\|}_{_2}.
\end{align*}
\end{theorem}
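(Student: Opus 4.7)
The plan is to expand both sides using the cosine theorem (Theorem \ref{T.14}) applied to the positive operator pairs $(|X^*|,|Y^*|)$ and $(|X|,|Y|)$, and then exploit two trivial facts: (a) any cosine is at most $1$, and (b) the cosine of the angle between two positive operators is nonnegative (Remark \ref{R.12}(ii)).

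First I would dispose of the trivial cases $X=0$ or $Y=0$ and assume $X, Y \in \mathcal{C}_2(\mathcal{H}) \setminus \{0\}$. Then, using \eqref{I.2.1}, I would write
\begin{align*}
{\big\|\,|X^*| + |Y^*|\,\big\|}^2_{_2}
&= {\|X\|}^2_{_2} + {\|Y\|}^2_{_2} + 2{\|X\|}_{_2}{\|Y\|}_{_2}\cos\Theta_{_{|X^*|,|Y^*|}},\\
{\big\|\,|X| + |Y|\,\big\|}^2_{_2}
&= {\|X\|}^2_{_2} + {\|Y\|}^2_{_2} + 2{\|X\|}_{_2}{\|Y\|}_{_2}\cos\Theta_{_{|X|,|Y|}}.
\end{align*}

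Next I would bound the first expression from above: since $\cos\Theta_{_{|X^*|,|Y^*|}} \leq 1$ and $2{\|X\|}_{_2}{\|Y\|}_{_2} \leq {\|X\|}^2_{_2} + {\|Y\|}^2_{_2}$ by AM--GM, we obtain
\begin{align*}
{\big\|\,|X^*| + |Y^*|\,\big\|}^2_{_2} \leq 2\Big({\|X\|}^2_{_2} + {\|Y\|}^2_{_2}\Big).
\end{align*}
Finally I would bound the second expression from below using Remark \ref{R.12}(ii), which gives $\cos\Theta_{_{|X|,|Y|}} \geq 0$ because $|X|$ and $|Y|$ are positive; this yields
\begin{align*}
{\big\|\,|X| + |Y|\,\big\|}^2_{_2} \geq {\|X\|}^2_{_2} + {\|Y\|}^2_{_2}.
\end{align*}
Combining the two displayed inequalities and taking square roots gives the claim.

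There is essentially no obstacle here: the argument is a direct consequence of Theorem \ref{T.14} once one observes the sign of $\cos\Theta_{_{|X|,|Y|}}$. The only point deserving attention is that the inequality is \emph{not} symmetric in $|X^*|+|Y^*|$ and $|X|+|Y|$, which is explained precisely by the fact that positivity of $|X|$ and $|Y|$ forces $\cos\Theta_{_{|X|,|Y|}} \ge 0$, whereas $\cos\Theta_{_{|X^*|,|Y^*|}}$ plays no role (it is simply dominated by $1$). The stronger inequality from Theorem \ref{T.18}, relating these two cosines, is therefore not needed for this particular estimate.
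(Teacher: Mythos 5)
Your proof is correct and uses essentially the same ingredients as the paper's: the cosine expansion from Theorem \ref{T.14} together with \eqref{I.2.1}, the bound $\cos\Theta_{_{|X^*|,|Y^*|}}\leq 1$, the AM--GM inequality $2{\|X\|}_{_2}{\|Y\|}_{_2}\leq {\|X\|}^2_{_2}+{\|Y\|}^2_{_2}$, and the nonnegativity of $\cos\Theta_{_{|X|,|Y|}}$ for positive operators. The only difference is organizational: you bound the two sides separately through the intermediate quantity $2({\|X\|}^2_{_2}+{\|Y\|}^2_{_2})$, whereas the paper combines the same three facts into a single rearranged inequality; your presentation is arguably cleaner.
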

\begin{proof}
We may assume that $X, Y\neq0$ otherwise the desired inequality trivially holds.
Since
$-2 \leq \cos\Theta_{_{|X^*|,|Y^*|}} -2\cos\Theta_{_{|X|,|Y|}} \leq 1$,
by the arithmetic-geometric mean inequality we get
\begingroup\makeatletter\def\f@size{10}\check@mathfonts
\begin{align*}
2{\|X\|}_{_2}{\|Y\|}_{_2}\Big(\cos\Theta_{_{|X^*|,|Y^*|}} -2\cos\Theta_{_{|X|,|Y|}}\Big)
\leq 2{\|X\|}_{_2}{\|Y\|}_{_2} \leq {\|X\|}^2_{_2} + {\|Y\|}^2_{_2}.
\end{align*}
\endgroup
Hence
\begingroup\makeatletter\def\f@size{10}\check@mathfonts
\begin{align}\label{T.I.1.1406}
2{\|X\|}_{_2}{\|Y\|}_{_2}\cos\Theta_{_{|X^*|,|Y^*|}} \leq {\|X\|}^2_{_2} + {\|Y\|}^2_{_2}
+ 4{\|X\|}_{_2}{\|Y\|}_{_2}\cos\Theta_{_{|X|,|Y|}}.
\end{align}
\endgroup
So, by \eqref{I.2.1}, Theorem \ref{T.14} and \eqref{T.I.1.1406} we have
\begingroup\makeatletter\def\f@size{10}\check@mathfonts
\begin{align*}
{\big\|\,|X^*| + |Y^*|\,\big\|}^2_{_2}
& = {\big\|\,|X^*|\,\big\|}^2_{_2} + {\big\|\,|Y^*|\,\big\|}^2_{_2} + 2{\big\|\,|X^*|\,\big\|}_{_2}{\big\|\,|Y^*|\,\big\|}_{_2}\cos\Theta_{_{|X^*|,|Y^*|}}
\\& = {\|X\|}^2_{_2} + {\|Y\|}^2_{_2} + 2{\|X\|}_{_2}{\|Y\|}_{_2}\cos\Theta_{_{|X^*|,|Y^*|}}
\\& \leq 2{\|X\|}^2_{_2} + 2{\|Y\|}^2_{_2} + 4{\|X\|}_{_2}{\|Y\|}_{_2}\cos\Theta_{_{|X|,|Y|}}
\\& = 2\Big({\big\|\,|X|\,\big\|}^2_{_2} + {\big\|\,|Y|\,\big\|}^2_{_2} + 2{\big\|\,|X|\,\big\|}_{_2}{\big\|\,|Y|\,\big\|}_{_2}\cos\Theta_{_{|X|,|Y|}}\Big)
= 2{\big\|\,|X| + |Y|\,\big\|}^2_{_2}.
\end{align*}
\endgroup
\end{proof}
In \cite[p.~584]{Lee}, E.-Y.~Lee conjectured that for arbitrary $n$-by-$n$
matrices $A$ and $B$, the inequality
\begin{align*}
{\big\|A + B\big\|}_{_2} \leq \sqrt{\frac{\sqrt{2} + 1}{2}}\,{\big\|\,|A| + |B|\,\big\|}_{_2}
\end{align*}
holds. We end this section by a proof of Lee's conjecture for operators.
\begin{theorem}\label{T.14060}
If $X$ and $Y$ are operators on $\mathcal{H}$, then
\begin{align*}
{\big\|X + Y\big\|}_{_2} \leq \sqrt{\frac{\sqrt{2} + 1}{2}}\,{\big\|\,|X| + |Y|\,\big\|}_{_2}.
\end{align*}
\end{theorem}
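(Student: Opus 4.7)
The plan is to reduce Lee's conjecture to a one-variable scalar inequality by applying the angle machinery of Theorems~\ref{T.14} and~\ref{T.18}(ii). Assume $X, Y \neq 0$; otherwise the inequality is trivial. Set $a = {\|X\|}_{_2}$, $b = {\|Y\|}_{_2}$, $\alpha = \cos\Theta_{_{X,Y}}$, and $\gamma = \cos\Theta_{_{|X|,|Y|}}$, noting that $\gamma \in [0,1]$ since $|X|$ and $|Y|$ are positive (Remark~\ref{R.12}(ii)). Two applications of Theorem~\ref{T.14}, together with ${\big\|\,|X|\,\big\|}_{_2} = {\|X\|}_{_2}$ and ${\big\|\,|Y|\,\big\|}_{_2} = {\|Y\|}_{_2}$ from \eqref{I.2.1}, reduce the square of the target inequality to
\[
a^2 + b^2 + 2ab\,\alpha \;\le\; \frac{\sqrt{2} + 1}{2}\bigl(a^2 + b^2 + 2ab\,\gamma\bigr).
\]

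The next step is to trade $\alpha$ for $\gamma$. Theorem~\ref{T.18}(ii) gives $\alpha \le |\alpha| \le \sqrt{\cos\Theta_{_{|X|,|Y|}}} = \sqrt{\gamma}$, so it suffices to prove
\[
2ab\bigl(\sqrt{\gamma} - c\gamma\bigr) \;\le\; (c-1)\bigl(a^2 + b^2\bigr), \qquad c := \frac{\sqrt{2} + 1}{2}.
\]
If $\sqrt{\gamma} - c\gamma \le 0$ the left side is nonpositive and there is nothing to do. Otherwise the arithmetic--geometric mean inequality $a^2 + b^2 \ge 2ab$ further reduces matters to the purely scalar inequality $\sqrt{\gamma} - c\gamma \le c - 1$ for $\gamma \in [0,1]$.

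The main (and only remaining) obstacle is verifying this scalar inequality sharply. A one-line calculus check shows $\gamma \mapsto \sqrt{\gamma} - c\gamma$ attains its maximum at $\sqrt{\gamma} = 1/(2c)$ with value $1/(4c)$, so the inequality collapses to the algebraic identity $4c(c-1) = 1$. A direct computation confirms $4c^2 - 4c = (3 + 2\sqrt{2}) - (2\sqrt{2} + 2) = 1$, so equality holds in the scalar step -- which is precisely why the constant cannot be improved. Tracing back, equality throughout forces $a = b$, $\alpha = \sqrt{\gamma}$, and $\sqrt{\gamma} = \sqrt{2} - 1$; this borderline configuration should coincide with the numerical extremal example that the author invokes to establish sharpness of $\sqrt{(\sqrt{2}+1)/2}$.
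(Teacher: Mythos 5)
Your proposal is correct and follows essentially the same route as the paper: both reduce the claim, via Theorem~\ref{T.14}, \eqref{I.2.1} and Theorem~\ref{T.18}(ii), to the scalar inequality $a^{2}+b^{2}+2abt\le \frac{\sqrt{2}+1}{2}\bigl(a^{2}+b^{2}+2abt^{2}\bigr)$. The only (cosmetic) difference is in verifying that scalar fact -- you use $a^{2}+b^{2}\ge 2ab$ plus a one-variable maximization hinging on $4c(c-1)=1$, while the paper writes the difference as $\frac{\sqrt{2}-1}{2}(a-b)^{2}+ab\bigl(\sqrt{\sqrt{2}-1}-\sqrt{\sqrt{2}+1}\,t\bigr)^{2}\ge 0$, which encodes exactly the same identity.
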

\begin{proof}
Since the desired inequality trivially holds when $X= 0$ or $Y=0$, we may assume $X, Y\neq0$.
By \eqref{I.2.1}, Theorem \ref{T.14}, Theorem \ref{T.18}(ii) we have
\begingroup\makeatletter\def\f@size{10}\check@mathfonts
\begin{align*}
\frac{\sqrt{2} + 1}{2}\,{\big\|\,|X| + |Y|\,\big\|}^2_{_2}
& = \frac{\sqrt{2} + 1}{2}\Big({\big\|\,|X|\,\big\|}^2_{_2} + {\big\|\,|Y|\,\big\|}^2_{_2} + 2{\big\|\,|X|\,\big\|}_{_2}{\big\|\,|Y|\,\big\|}_{_2}\cos\Theta_{_{|X|,|Y|}}\Big)
\\& = \frac{\sqrt{2} + 1}{2}\Big({\|X\|}^2_{_2} + {\|Y\|}^2_{_2} + 2{\|X\|}_{_2}{\|Y\|}_{_2}\cos\Theta_{_{|X|,|Y|}}\Big)
\\& \geq \frac{\sqrt{2} + 1}{2}\Big({\|X\|}^2_{_2} + {\|Y\|}^2_{_2} + 2{\|X\|}_{_2}{\|Y\|}_{_2}\cos^2\Theta_{_{X,Y}}\Big)
\\& = {\|X\|}^2_{_2} + {\|Y\|}^2_{_2} + 2{\|X\|}_{_2}{\|Y\|}_{_2}\cos\Theta_{_{X,Y}}
+ \frac{\sqrt{2}-1}{2}\Big({\|X\|}^2_{_2} + {\|Y\|}^2_{_2}\Big)
\\& \qquad \qquad \qquad \qquad + {\|X\|}_{_2}{\|Y\|}_{_2}\Big((\sqrt{2} + 1)\cos^2\Theta_{_{X,Y}} - 2\cos\Theta_{_{X,Y}}\Big)
\\& = {\big\|X + Y\big\|}^2_{_2} +
\frac{\sqrt{2}-1}{2}\big({\|X\|}_{_2} - {\|Y\|}_{_2}\big)^2
\\& \qquad \qquad \qquad + {\|X\|}_{_2}{\|Y\|}_{_2}\Big((\sqrt{2} - 1) + (\sqrt{2} + 1)\cos^2\Theta_{_{X,Y}} - 2\cos\Theta_{_{X,Y}}\Big)
\\& = {\big\|X + Y\big\|}^2_{_2} +
\frac{\sqrt{2}-1}{2}\big({\|X\|}_{_2} - {\|Y\|}_{_2}\big)^2
\\& \qquad \qquad \qquad + {\|X\|}_{_2}{\|Y\|}_{_2}\Big(\sqrt{\sqrt{2} - 1} - \sqrt{\sqrt{2} + 1}\cos\Theta_{_{X,Y}}\Big)^2
\\& \geq {\big\|X + Y\big\|}^2_{_2}.
\end{align*}
\endgroup
\end{proof}
\begin{remark}\label{R.129}
Suppose $\mathcal{M}_2(\mathbb{C})$ is the algebra of all complex $2\times2$ matrices.
Let ${\rm Det}(A)$ denote the determinant of $A\in\mathcal{M}_2(\mathbb{C})$.
Recall (see e.g. \cite[p.~460]{Franca}) that for $A \in \mathcal{M}_2(\mathbb{C})$ we have
\begin{align*}
|A| = \frac{1}{\sqrt{{\rm Tr}(A^*A) + 2\sqrt{{\rm Det}(A^*A)}}}\Big(\sqrt{{\rm Det}(A^*A)}\,I + A^*A\Big).
\end{align*}
Now, let $X=\begin{bmatrix}
0 & 0 \\
-1 & 0
\end{bmatrix}$, $Y=\begin{bmatrix}
0 & 0 \\
0 & 1
\end{bmatrix}$ and $Z=\begin{bmatrix}
0 & 0 \\
1-\sqrt{2} & \sqrt{\sqrt{8}-2}
\end{bmatrix}$.
Then simple computations show that $|X|= \begin{bmatrix}
1 & 0 \\
0 & 0
\end{bmatrix}$, $|X^*|= \begin{bmatrix}
0 & 0 \\
0 & 1
\end{bmatrix}$, $|Y| = |Y^*|= \begin{bmatrix}
0 & 0 \\
0 & 1
\end{bmatrix}$ and
$|Z|=\begin{bmatrix}
3-\sqrt{8} & -\sqrt{\sqrt{200}-14} \\
-\sqrt{\sqrt{200}-14} & \sqrt{8}-2
\end{bmatrix}$.
Therefore,
\begin{align*}
{\big\|\,|X^*| + |Y^*|\,\big\|}_{_2}=
{\left\|\begin{bmatrix}
2 & 0 \\
0 & 0
\end{bmatrix}\right\|}_{_2} = \sqrt{{\rm Tr}\left(\begin{bmatrix}
4 & 0 \\
0 & 0
\end{bmatrix}\right)}=2
\end{align*}
and
\begin{align*}
\sqrt{2}{\big\|\,|X| + |Y|\,\big\|}_{_2} = \sqrt{2}{\left\|\begin{bmatrix}
1 & 0 \\
0 & 1
\end{bmatrix}\right\|}_{_2} = \sqrt{{\rm Tr}\left(\begin{bmatrix}
1 & 0 \\
0 & 1
\end{bmatrix}\right)} = 2.
\end{align*}
Hence the inequality in Theorem \ref{T.1406} is sharp.
In addition, we have
\begingroup\makeatletter\def\f@size{10}\check@mathfonts
\begin{align*}
{\big\|X + Z\big\|}_{_2} = {\left\|\begin{bmatrix}
0 & 0 \\
-\sqrt{2} & \sqrt{\sqrt{8}-2}
\end{bmatrix}\right\|}_{_2} = \sqrt{{\rm Tr}\left(\begin{bmatrix}
2 & -\sqrt{\sqrt{32}-4} \\
-\sqrt{\sqrt{32}-4} & \sqrt{8}-2
\end{bmatrix}\right)}=\sqrt[4]{8}
\end{align*}
\endgroup
and
\begingroup\makeatletter\def\f@size{10}\check@mathfonts
\begin{align*}
\sqrt{\frac{\sqrt{2} + 1}{2}}\,{\big\|\,|X| + |Z|\,\big\|}_{_2} &=
\sqrt{\frac{\sqrt{2} + 1}{2}}\,{\left\|\begin{bmatrix}
4-\sqrt{8} & -\sqrt{\sqrt{200}-14} \\
-\sqrt{\sqrt{200}-14} & \sqrt{8}-2
\end{bmatrix}\right\|}_{_2}
\\&= \sqrt{\frac{\sqrt{2} + 1}{2}}\,\sqrt{{\rm Tr}\left(\begin{bmatrix}
10-\sqrt{72} & -\sqrt{\sqrt{3200}-56} \\
-\sqrt{\sqrt{3200}-56} & \sqrt{8}-2
\end{bmatrix}\right)}
\\& = \sqrt{\frac{\sqrt{2} + 1}{2}}\,\sqrt{2\sqrt{8}-4} =\sqrt[4]{8}.
\end{align*}
\endgroup
So, the inequality in Theorem \ref{T.14060} is also sharp.
\end{remark}
\bibliographystyle{amsplain}

\end{document}